\numberwithin{equation}{section}
\newtheorem{theorem}{Theorem}[section]
\newtheorem{corollary}[theorem]{Corollary}
\newtheorem{lemma}[theorem]{Lemma}
\theoremstyle{definition}
\newtheorem{defn}[theorem]{Definition}
\newtheorem{remark}[theorem]{Remark}
\newtheorem{example}[theorem]{Example}
\def \bb{\mathbb}
\def \mb{\mathbf}
\def \mc{\mathcal}
\def \CC{{\bb{C}}}
\def \({\left(}
\def \){\right)}
\def \<{\langle}
\def \>{\rangle}
\def \Aut{{\rm Aut}}
\begin{document}

\title[Equivariant principal bundles over smooth toric  varieties]{A classification of equivariant
principal bundles over nonsingular toric varieties}

\author[I. Biswas]{Indranil Biswas}
\address{School of Mathematics, Tata Institute of Fundamental Research, Mumbai, India }

\email{indranil@math.tifr.res.in}

\author[A. Dey]{Arijit Dey}

\address{Department of Mathematics, Indian Institute of Technology-Madras, Chennai, India }

\email{arijitdey@gmail.com}

\author[M. Poddar]{Mainak Poddar}

\address{Departamento de
Matem\'aticas, Universidad de los Andes, Bogot\'a, Colombia; 
and  \\ Department of Mathematics, Middle East Technical University, Northern Cyprus Campus, 
Kalkanli, Guzelyurt, KKTC, Mersin 10 Turkey}

\email{mainakp@gmail.com}

\subjclass[2010]{32L05, 14M25, 55R91}

\keywords{Toric varieties, equivariant bundles, principal bundles}

\begin{abstract}
We classify holomorphic as well as algebraic torus
equivariant principal $G$-bundles over a nonsingular toric variety
$X$, where $G$ is a  complex linear algebraic group. It is shown that any 
such bundle over an affine, nonsingular toric variety admits a trivialization
in equivariant sense. We also obtain some splitting results.
\end{abstract}

\maketitle

\section{Introduction}

Denote the algebraic torus $(\CC^*)^n$ by $T$. Let $N$ denote the group of $1$-parameter
subgroups of $T$; so $N$ is isomorphic to $\mathbb Z^n$. Let $M$ denote the group of
characters of $T$. Then $M \,=\, Hom_{\mathbb Z}(N,\mathbb Z)$ by restricting a
character to the $1$-parameter subgroups (the endomorphisms of $\CC^*$ is $\mathbb Z$).

Let $X$ be a complex manifold (resp. smooth complex algebraic variety) equipped with a holomorphic (resp. algebraic) left-action of $T$, and let $G$
be a complex linear  algebraic group. A $T$-equivariant holomorphic (resp. algebraic) principal $G$-bundle on $X$ is a
holomorphic (resp. algebraic) principal $G$-bundle $\pi: \mathcal{E} \to X$ with $T$ acting holomorphically (resp. algebraically)
on $\mathcal{E}$, such that $\pi$ is $T$-equivariant and
$$t(z\cdot g)\,=\, (tz)\cdot g \ ~ \ \forall \ t\in T,\,g \in G,\, z\in
\mathcal{E}$$
(this means that the actions of $T$ and $G$ on $\mathcal{E}$ commute).

Now consider $X= X_{\Xi}$ to be a nonsingular complex toric variety of dimension $n$
corresponding to a fan $\Xi$. Our main goal is to
give a description of isomorphism classes of 
holomorphic (resp. algebraic) $T$-equivariant principal 
$G$-bundles over
$X$; this is carried out in Theorem \ref{class}. This description is in terms of
equivalence classes of data $\{\rho_{\sigma}\, , P(\tau,\sigma)\}$, where
$\sigma$ runs over maximal cones of $\Xi$,
while $\rho_{\sigma}:T \to G$ is a holomorphic (equivalently algebraic) group homomorphism and
$P(\tau,\sigma)$ is a $G$-valued $1$-cocycle. This data
$\{\rho_{\sigma}, P(\tau,\sigma)\}$ is required to satisfy certain conditions.
In particular, the data depends on choice of projections $\pi_{\sigma}: T \to T_{\sigma} $ (see
 section \ref{sds}).  Here $T_{\sigma}$ denotes the stabilizer of the orbit corresponding 
 to the cone $\sigma$.  However, if every maximal cone is of top dimension (such as for a complete toric variety), then such a choice of  projections  is not necessary.

Note that the description of isomorphism classes is the same in both the algebraic 
and holomorphic cases. We prove that every isomorphism class of holomorphic 
bundles contains an algebraic representative; see Remark \ref{algrep}. 

As an application of the above description,
we prove that  if $G$ is nilpotent then the principal $G$-bundle
admits an equivariant reduction of structure group to a torus (meaning
the bundle is split). For $G$ abelian, this
splitting result appeared in \cite{DP}. 

In section \ref{ktd}, we give a more combinatorial description of the isomorphism classes
of $T$-equivariant  principal $G$-bundles
in terms of characters of $T$ and $G$-valued $1$-cocycles; see
Theorems \ref{class2} and \ref{class3}. These 
generalize  Kaneyama's description \cite{Kan1} of the isomorphism classes of $T$-equivariant vector bundles over
complete nonsingular toric varieties in two directions. Firstly, the assumption of completeness on the base is removed. Secondly, the  bundles are generalized
from vector to principal bundles. 

 If $G$ has a normal  maximal torus,
then a $T$-equivariant principal $G$-bundle  over $X$ has an equivariant reduction
 of structure group to the maximal torus; see Lemma \ref{normal}.
 Our treatment also makes clearer the approach of Kaneyama.  
 This helps us unearth a gap in the proof 
 of the splitting of small rank equivariant vector bundles over
  projective space in \cite{Kan2}; see Remark \ref{srb}.

In Lemma \ref{aut} we show that the automorphism group of a
$T$-equivariant holomorphic (or algebraic) principal $G$-bundle over a toric
variety is a subgroup of $G$ that contains the center of $G$.

 A crucial tool used by us is the following equivariant Oka principle due to
Heinzner and Kutzschebauch \cite{HK}. The algebraic analogue of this is derived
 in the toric case in Lemma \ref{hka}.
 
\begin{theorem}\label{hk}\cite{HK} Suppose $K$ is any complex
reductive group acting linearly on $\CC^k$. Then any
$K$-equivariant holomorphic principal $G$-bundle $\mc{E}$ over $\CC^k$ is
equivariantly isomorphic to $\CC^k \times \mc{E}(0)$ where the
latter has diagonal $K$-action. \end{theorem}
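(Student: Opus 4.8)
The plan is to deduce this from the equivariant Oka--Grauert principle together with the $K$-equivariant contractibility of $\CC^k$; I also indicate how the former is established. Since the $K$-action on $\CC^k$ is linear, the origin is a $K$-fixed point, so the fibre $\mc{E}(0) = \pi^{-1}(0)$ is a principal $G$-space carrying a commuting $K$-action, and $\CC^k \times \mc{E}(0)$ with the diagonal $K$-action is a $K$-equivariant holomorphic principal $G$-bundle. The map $H \colon \CC^k \times [0,1] \to \CC^k$, $H(z,s) = sz$, is $K$-equivariant --- this is precisely where linearity enters, since $H(k\cdot z, s) = s(k\cdot z) = k\cdot(sz)$ --- and is a $K$-equivariant homotopy from the identity of $\CC^k$ to the (also $K$-equivariant) constant map $z \mapsto 0$. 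Hence, by the homotopy invariance of the classification of $K$-equivariant \emph{topological} principal $G$-bundles (which may be verified for the maximal compact subgroup $K_c \subset K$, since a holomorphic $K_c$-action on a Stein space extends uniquely to a holomorphic action of the complexification $K$), the bundles $\mc{E}$ and $\CC^k \times \mc{E}(0)$ are $K$-equivariantly topologically isomorphic. The theorem is thus reduced to the equivariant Oka principle: two $K$-equivariant holomorphic principal $G$-bundles on $\CC^k$ that are $K$-equivariantly topologically isomorphic are already $K$-equivariantly holomorphically isomorphic.

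This equivariant Oka principle is proved by carrying out Grauert's classical argument inside the category of $K$-spaces. Exhaust $\CC^k$ by $K$-invariant Stein open subsets $B_r$ (for instance the preimages under the analytic Hilbert quotient $\CC^k \to \CC^k /\!/ K$ of a Stein exhaustion of the quotient). Since $G$ is a complex linear algebraic group it embeds in some $\mathrm{GL}_m(\CC)$; a $K$-equivariant principal $G$-bundle then becomes a $K$-equivariant holomorphic vector bundle together with a holomorphic reduction of structure group, which turns the non-abelian problem into one about sections of associated $K$-equivariant coherent sheaves. The inputs one needs are the equivariant Cartan Theorems A and B for coherent $K$-sheaves on Stein $K$-spaces (hence vanishing of the relevant higher equivariant sheaf cohomology), an equivariant Runge-type approximation over the $B_r$, and a holomorphic Luna slice theorem for Stein $K$-spaces, which localizes the analysis near the (possibly positive-dimensional, non-free) $K$-orbits and supplies local holomorphic models. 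With these, the usual bump-function induction along $\{B_r\}$ deforms a continuous $K$-equivariant isomorphism, through continuous $K$-equivariant isomorphisms, to a holomorphic one, an implicit-function-theorem step handling the passage from $\mathrm{GL}_m(\CC)$ back to $G$. This is the content of \cite{HK}.

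I expect the main obstacle to be exactly this deformation step. Keeping the $K$-action under control at every stage of the exhaustion and of the homotopy forces one to work systematically with coherent $K$-sheaves, $K$-invariant exhaustions, and the quotient $\CC^k /\!/ K$ in place of their non-equivariant analogues, and the non-abelian nature of the cocycle condition makes the slice-theoretic reduction to local models indispensable for handling orbits with positive-dimensional isotropy. Once the equivariant Oka principle is granted the theorem follows immediately: $\mc{E}$ and $\CC^k \times \mc{E}(0)$ are $K$-equivariantly topologically isomorphic via $H$, hence $K$-equivariantly holomorphically isomorphic, which is the assertion.
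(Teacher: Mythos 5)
This statement is quoted verbatim from Heinzner--Kutzschebauch and the paper offers no proof of it, only the citation \cite{HK}; your outline (the linear contraction $H(z,s)=sz$ gives $K$-equivariant topological triviality, and the equivariant Oka principle then upgrades a topological to a holomorphic equivariant isomorphism) is the correct shape of the argument and, like the paper, ultimately defers the analytic core to \cite{HK}. So your proposal is consistent with the paper's treatment, with the caveat that your parenthetical justification of homotopy invariance for the noncompact group $K$ (via the maximal compact $K_c$) and your sketch of the Oka principle itself are only indications of the ingredients, not a self-contained proof.
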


Some of the basic lemmas regarding local action functions in section 2
have appeared before in \cite{DP}. We include them
here for the  convenience of the reader.

In the topological case, the classification of $(S^1)^n$-equivariant principal $G$-bundles over a toric manifold, where $G$ is a compact Lie group, is contained in the results of Hambleton and Hausmann \cite{HH}. The classification of $(\mathbb{C}^*)^n$-equivariant  topological principal bundles over a toric manifold appears to be an interesting open problem.

\section{Distinguished sections}\label{sds}

Denote by $\Xi(d)$ the set of all
$d$-dimensional cones in $\Xi$. For a cone $\sigma$ in
$\Xi$, denote the corresponding affine variety and $T$-orbit
by $X_{\sigma}$ and $O_{\sigma}$ respectively.
Let $T_{\sigma}$ denote the stabilizer of any point in $O_{\sigma}$;
it is independent of the point because $T$ is abelian. We recall that each
$O_{\sigma}$ may be given a group structure using a specific identification
with $T / T_{\sigma} $, see \cite[p. 53]{Ful} or \cite[Proposition 1.6]{Oda}.  The principal orbit
$O$ may be identified with $T$. 

For each $\sigma$, fix once and for all, an embedding
$\iota_{\sigma}: O_{\sigma} \hookrightarrow T$ that splits 
the exact sequence $$ 1 \to T_{\sigma} \to T \to O_{\sigma} \to 1 \,. $$
As we are in the nonsingular case, such an embedding may be obtained by extending a 
set of independent, primitive generators of $\sigma$ to a basis of the lattice $N$. 
 A splitting, as above, induces a direct product
decomposition $T\,=\, T_{\sigma}\times \iota_{\sigma}(O_{\sigma})$.
Let $\pi_{\sigma}: T \to T_{\sigma} $ be the projection associated
to this decomposition of $T$.

Fix any $\sigma \,\in\, \Xi$.  Let $X \,=\, X_{\sigma}$.
There exists an affine toric variety $A_{\sigma}$ with the dense orbit isomorphic  to $T_{\sigma}$, such that
 $X$ is $T$-equivariantly isomorphic to $A_{\sigma}\times O_{\sigma}$. Under this isomorphism, the orbit
 $O_{\sigma}$ in $X$ is identified with $\{p\} \times O_{\sigma} $,
 where $p$ is the $T_{\sigma}$-fixed point of
$A_{\sigma}$. Moreover, there exist  isomorphisms
 $a_{\sigma} : T_{\sigma} \to (\CC^*)^{\dim(\sigma)}$ and
 $\phi_{\sigma} : A_{\sigma}  \to \CC^{\dim(\sigma)}$, satisfying
 $$\phi_{\sigma} (t x ) \,=\, a_{\sigma}(t) \phi_{\sigma}(x) \, $$ for all $x \in A_{\sigma}$ and  $t \in T_{\sigma}$.

Let $G$ denote a complex linear  algebraic group. Suppose $\mathcal{E}$ is a
$T$-equivariant holomorphic (resp. algebraic) principal $G$-bundle over $X$ 
which admits a holomorphic (resp. algebraic) trivialization.
Let $s: X \to \mathcal{E}$ be any holomorphic (resp. algebraic) section. We encode the
$T$-action on $\mathcal{E}$ as follows:

\begin{defn} For any $x\in X$ and $t\in T$, define $\rho_s(x,t) \in G$ by
$$
t s(x) \,=\, s(tx)\cdot \rho_s(x,t)\, .
$$
Since the action of
$G$ on each fiber of $\mathcal{E}$ is free and transitive, it follows that
$\rho_s(x,t)$ is well-defined and it is holomorphic (resp.  algebraic) in both $x$ and $t$.
We say that $\rho_s: X \times T \to G$ is the local action function associated to $s$.
\end{defn}

If $s'(x)\,=\, s(x)\cdot g(x)$ is another holomorphic (resp. algebraic) section of $\mathcal E$,
then it is straight-forward to check that
\begin{equation}\label{eq:rho2}
 \rho_{s'}(x,t) \,=\,  g(tx)^{-1} \rho_s (x,t) g(x)\, .
\end{equation}

\begin{lemma}\label{lem:rho3} For any $t_1\, ,t_2 \in\,T$, the equality
$$\rho_s(x, t_1t_2)\,=\, \rho_s(t_2x,t_1) \rho_s(x,t_2)$$ holds.
\end{lemma}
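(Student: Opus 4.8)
The plan is to verify the cocycle-type identity directly from the defining relation $t\, s(x) = s(tx)\cdot \rho_s(x,t)$, by computing the action of the product $t_1 t_2$ on $s(x)$ in two ways and comparing. Since the action of $G$ on each fiber is free, any identity of the form $z\cdot g_1 = z\cdot g_2$ forces $g_1 = g_2$, so it suffices to exhibit the two sides as the coefficients relating $(t_1 t_2) s(x)$ to $s(t_1 t_2 x)$.

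First I would apply the definition with $t = t_2$ to get $t_2\, s(x) = s(t_2 x)\cdot \rho_s(x,t_2)$. Then I would apply $t_1$ to both sides. On the left this gives $t_1\bigl(t_2\, s(x)\bigr) = (t_1 t_2)\, s(x)$ since $T$ acts on $\mathcal{E}$ as a group action. On the right, using that the $T$-action and the $G$-action on $\mathcal{E}$ commute, $t_1\bigl(s(t_2 x)\cdot \rho_s(x,t_2)\bigr) = \bigl(t_1\, s(t_2 x)\bigr)\cdot \rho_s(x,t_2)$. Now apply the definition once more, this time at the point $t_2 x$ with group element $t_1$: $t_1\, s(t_2 x) = s(t_1 t_2 x)\cdot \rho_s(t_2 x, t_1)$. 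Substituting, the right-hand side becomes $s(t_1 t_2 x)\cdot \rho_s(t_2 x, t_1)\rho_s(x,t_2)$.

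Comparing with the definition applied directly to $t_1 t_2$, namely $(t_1 t_2)\, s(x) = s(t_1 t_2 x)\cdot \rho_s(x, t_1 t_2)$, we obtain
\[
s(t_1 t_2 x)\cdot \rho_s(x, t_1 t_2) \,=\, s(t_1 t_2 x)\cdot \rho_s(t_2 x, t_1)\rho_s(x,t_2)\,,
\]
and freeness of the $G$-action on the fiber over $t_1 t_2 x$ yields the claimed equality $\rho_s(x, t_1 t_2) = \rho_s(t_2 x, t_1)\rho_s(x, t_2)$.

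This argument is essentially a bookkeeping computation, so I do not anticipate a genuine obstacle; the only point requiring care is the order in which $t_1$ and $t_2$ are applied and the correct insertion of the commutativity of the $T$- and $G$-actions, which is exactly where the hypothesis $t(z\cdot g) = (tz)\cdot g$ is used. Note also that the $T$-action being a left action (so that $t_1(t_2 z) = (t_1 t_2) z$) is what produces the specific order $\rho_s(t_2 x, t_1)\rho_s(x, t_2)$ on the right-hand side rather than its reverse.
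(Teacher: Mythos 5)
Your proof is correct and follows exactly the same route as the paper's: compute $(t_1t_2)s(x)$ in two ways using the defining relation and the commutativity of the $T$- and $G$-actions, then cancel by freeness of the $G$-action on the fiber. No issues.
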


\begin{proof}
By the definition of $\rho_s$, we have $t_1t_2s(x)\,=\, s(t_1t_2x) \cdot \rho_s(x,t_1t_2)$.
On the other hand, $$t_1t_2s(x) = t_1( s(t_2x) \cdot \rho_s(x, t_2 )) \,=\,
(t_1 s(t_2x)) \cdot \rho_s(x, t_2 )  = s(t_1t_2x) \cdot
\rho_s(t_2x, t_1) \rho_s(x,t_2)\, .$$
The lemma follows from these.
\end{proof}

Suppose $\delta$ is any subcone of $\sigma$. Then Lemma \ref{lem:rho3} implies that for
any $x_{\delta} \,\in\, O_{\delta}$, the restriction
$$
\rho_s(x_{\delta}, \cdot)\,:\, T_{\delta} \,\to\, G
$$
is a group homomorphism, where $T_{\delta}$ as before is the stabilizer.

\begin{lemma}\label{lem:rho5} If $\rho_s(x,\cdot)$ is independent of $x$, then
it defines a group homomorphism $$\rho_s: T \to G\, .$$ Conversely if
$\rho_s(x_0,\cdot)$ is a group homomorphism for some $x_0 \,\in\, O$,
then $\rho_s(x,\cdot)$ is independent of $x$.
\end{lemma}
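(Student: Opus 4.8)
The plan is to treat the two implications separately, in both cases using the cocycle identity of Lemma \ref{lem:rho3} as the only real input.

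For the first implication, suppose $\rho_s(x,t)$ is independent of $x$, and write $\rho_s(t)$ for its common value. Then Lemma \ref{lem:rho3} reads $\rho_s(t_1t_2) = \rho_s(t_1)\,\rho_s(t_2)$ for all $t_1,t_2 \in T$, which is exactly the statement that $\rho_s \colon T \to G$ is a group homomorphism. This direction needs nothing further.

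For the converse, the first reduction is to prove that $x \mapsto \rho_s(x,t)$ is constant on the principal orbit $O$, for each fixed $t$; since $\rho_s$ is holomorphic (resp. algebraic) in $x$ and $O$ is dense in $X$, constancy on $O$ forces constancy on all of $X$. On $O$ one uses transitivity of the $T$-action: given the point $x_0 \in O$ of the hypothesis and an arbitrary $x \in O$, choose $t_0 \in T$ with $x = t_0 x_0$. Applying Lemma \ref{lem:rho3} with $(t_1,t_2) = (t,t_0)$ and with $x_0$ in place of the free variable gives
$$\rho_s(x_0, t t_0) \,=\, \rho_s(t_0 x_0, t)\,\rho_s(x_0, t_0) \,=\, \rho_s(x, t)\,\rho_s(x_0, t_0)\,,$$
so that $\rho_s(x,t) = \rho_s(x_0, t t_0)\,\rho_s(x_0, t_0)^{-1}$. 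Now invoking the hypothesis that $\rho_s(x_0,\cdot)$ is a homomorphism, the right-hand side equals $\rho_s(x_0,t)\,\rho_s(x_0,t_0)\,\rho_s(x_0,t_0)^{-1} = \rho_s(x_0,t)$. Hence $\rho_s(x,t) = \rho_s(x_0,t)$ for all $x \in O$ and $t \in T$, and the density argument above extends this to all $x \in X$.

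The only subtle point is precisely this passage from $O$ to $X$: the transitivity step compares two points lying in the same $T$-orbit, so it cannot be applied directly when $x$ sits on a lower-dimensional orbit $O_\delta$. The fix is exactly the continuity/density input recorded in the definition of the local action function, namely that $\rho_s$ is holomorphic (resp. algebraic) in the first argument while $O$ is dense in $X$. Everything else is a formal manipulation of the identity in Lemma \ref{lem:rho3}.
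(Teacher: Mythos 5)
Your proof is correct and follows essentially the same route as the paper: the forward direction is immediate from Lemma \ref{lem:rho3}, and the converse combines the cocycle identity at $x_0$ with the homomorphism hypothesis to cancel $\rho_s(x_0,t_0)$, giving constancy of $\rho_s(\cdot,t)$ on the orbit $O$, which then extends to all of $X$ by continuity and density. The only difference is cosmetic (you solve for $\rho_s(x,t)$ explicitly, while the paper equates two expressions for $\rho_s(x_0,tu)$), so nothing further is needed.
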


\begin{proof} The first part follows immediately from Lemma \ref{lem:rho3}.

To prove the second part, assume that $\rho_s(x_0,\cdot)$ is a group homomorphism
for some $x_0\,\in\, O$. We have
$$ \rho_s(x_0, t) \rho_s(x_0, u) \,=\, \rho_s(x_0,tu)\,=\,\rho_s(ux_0,t) \rho_s(x_0, u )$$
for all $u\, ,t \,\in\, T$. Therefore, for any $u \,\in\, T$, we have
$$ \rho_s(ux_0,\cdot) \,=\, \rho_s(x_0, \cdot).$$
Then, since $\rho_s$ is continuous, the second part of the lemma follows from the denseness
of $O$ in $X$.
\end{proof}

\begin{lemma}\label{lem:equiv} Let $X_1$ and $X_2$ be affine toric varieties.
 Let $\alpha: X_1 \to X_2$ be an isomorphism of $T$-spaces up to an
automorphism $a:T\to T$, meaning
$\alpha \circ t \,=\, a(t) \circ \alpha$. Suppose $\pi_i:
\mathcal{E}_i \to X_i$ is a $T$-equivariant trivial principal
$G$-bundle for $i=1,2$. Let $\phi: \mathcal{E}_1 \to \mathcal{E}_2$ be an
isomorphism of $T$-equivariant principal $G$-bundles over $X$, which is
compatible with $\alpha$ and $a$:
$$ \pi_2 \circ \phi = \alpha \circ \pi_1  \quad {\rm and} \quad  \phi \circ t = a(t) \circ \phi. $$
 Let $s_1$ be any section of
$\mathcal{E}_1$. Let $s_2$ be the section of $\mathcal{E}_2$
defined by $s_2(\alpha(x))\,=\, \phi(s_1(x))$ for any $x \in X_1$.
 Then $\rho_{s_1}(x, t ) \,=\, \rho_{s_2}(\alpha(x),a(t))$ for every $x\in X_1$, $t \in
 T$. In particular, if $\alpha$ and $a$ are both identity, then $\rho_{s_1} \,=\,
 \rho_{s_2}$.
\end{lemma}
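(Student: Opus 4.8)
The plan is to unwind the definitions on both sides and check that the defining equations match. Concretely, fix $x \in X_1$ and $t \in T$. I would start from the definition of $\rho_{s_2}$ applied at the point $\alpha(x)$ with the group element $a(t)$:
$$
a(t)\, s_2(\alpha(x)) \,=\, s_2\bigl(a(t)\alpha(x)\bigr)\cdot \rho_{s_2}(\alpha(x),a(t))\, .
$$
Using the compatibility $a(t)\circ\alpha = \alpha\circ t$, the argument $a(t)\alpha(x)$ equals $\alpha(tx)$, so the right-hand side becomes $s_2(\alpha(tx))\cdot \rho_{s_2}(\alpha(x),a(t))$.

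Next I would rewrite everything in terms of $\mathcal{E}_1$, $s_1$, and $\phi$. By the definition of $s_2$, we have $s_2(\alpha(x)) = \phi(s_1(x))$ and likewise $s_2(\alpha(tx)) = \phi(s_1(tx))$. The intertwining property $\phi\circ t = a(t)\circ\phi$ gives $a(t)\,\phi(s_1(x)) = \phi(t\, s_1(x))$, and by the definition of $\rho_{s_1}$ we have $t\, s_1(x) = s_1(tx)\cdot\rho_{s_1}(x,t)$. Since $\phi$ is a morphism of principal $G$-bundles it is $G$-equivariant, so $\phi(s_1(tx)\cdot\rho_{s_1}(x,t)) = \phi(s_1(tx))\cdot\rho_{s_1}(x,t)$. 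Stringing these together, the left-hand side of the displayed equation equals $\phi(s_1(tx))\cdot\rho_{s_1}(x,t) = s_2(\alpha(tx))\cdot\rho_{s_1}(x,t)$.

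Comparing with the earlier expression for the right-hand side, we get
$$
s_2(\alpha(tx))\cdot\rho_{s_1}(x,t) \,=\, s_2(\alpha(tx))\cdot\rho_{s_2}(\alpha(x),a(t))\, ,
$$
and since the $G$-action on the fiber over $\alpha(tx)$ is free, cancellation yields $\rho_{s_1}(x,t) = \rho_{s_2}(\alpha(x),a(t))$, as claimed. The final assertion is immediate: if $\alpha$ and $a$ are both the identity, then $X_1 = X_2$, the equation reads $\rho_{s_1}(x,t) = \rho_{s_2}(x,t)$ for all $x,t$, so $\rho_{s_1} = \rho_{s_2}$.

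There is no real obstacle here; the proof is a routine diagram chase. The only point requiring a little care is making sure the several compatibility hypotheses are invoked at the right moments — specifically that $\phi$ intertwines the $T$-actions via $a$, that $\phi$ is $G$-equivariant (being a bundle isomorphism), and that $\alpha$ intertwines the $T$-actions via $a$ so that $a(t)\alpha(x) = \alpha(tx)$. One should also note that $s_2$ is well-defined precisely because $\alpha$ is an isomorphism, so every point of $X_2$ is of the form $\alpha(x)$ for a unique $x \in X_1$.
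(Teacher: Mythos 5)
Your proof is correct and follows essentially the same chain of equalities as the paper's: start from the defining equation of $\rho_{s_2}$ at $(\alpha(x),a(t))$, push everything through $\phi$ using the $T$- and $G$-equivariance and the relation $a(t)\alpha(x)=\alpha(tx)$, and cancel by freeness of the $G$-action. No issues.
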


\begin{proof} This follows from the following calculation:
$$ \begin{array}{l} s_2(a(t) \alpha(x))\cdot \rho_{s_2}(\alpha(x),a(t)) \\
 = a(t) s_2( \alpha(x)) =  a(t) \phi( s_1 (x) ) \\
 = \phi (t s_1(x) ) = \phi(s_1(tx)\cdot \rho_{s_1}(x,t)) \\
= \phi(s_1(tx)) \cdot \rho_{s_1}(x,t)  = s_2(\alpha(tx))\cdot \rho_{s_1}(x,t) \\
 = s_2(a(t) \alpha(x) ) \cdot \rho_{s_1}(x,t).  \end{array}$$
\end{proof}

\begin{defn} We say that a section $s$  of $\mc{E}$ is distinguished if
\begin{itemize}
\item $\rho_{s}(x, \cdot)$ is independent of $x$, and

\item $\rho_s(x,\cdot)$ factors through the projection $\pi_{\sigma}:T \to T_{\sigma}$.
\end{itemize}
\end{defn}

\begin{lemma}\label{ds2}
If $s$ is a distinguished section, then so is
$s\cdot g$ for every $g \in G$.
\end{lemma}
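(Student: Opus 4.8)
The plan is to feed the section $s' \,=\, s\cdot g$ into the transformation rule \eqref{eq:rho2}. Since $s'(x)\,=\,s(x)\cdot g(x)$ with the \emph{constant} function $g(x)\,\equiv\, g$, formula \eqref{eq:rho2} specializes to
$$\rho_{s\cdot g}(x,t)\,=\,g^{-1}\,\rho_s(x,t)\,g$$
for all $x\in X$ and $t\in T$. Everything then follows by inspecting this identity, so there is essentially no obstacle; the only point worth isolating is that conjugation by a fixed element $g$ is a group automorphism of $G$, hence preserves both conditions in the definition of a distinguished section.

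Concretely, for the first condition: by hypothesis $\rho_s(x,t)$ does not depend on $x$, so neither does $g^{-1}\rho_s(x,t)g$, i.e. $\rho_{s\cdot g}(x,\cdot)$ is independent of $x$. For the second condition, write $\rho_s(x,t)\,=\,\psi(\pi_\sigma(t))$, where $\psi: T_\sigma \to G$ is the group homomorphism through which $\rho_s(x,\cdot)$ factors. Then $\rho_{s\cdot g}(x,t)\,=\,g^{-1}\psi(\pi_\sigma(t))g\,=\,\psi'(\pi_\sigma(t))$, where $\psi'\,:=\,g^{-1}\psi(\cdot)g$ is again a group homomorphism $T_\sigma \to G$ (being the composite of $\psi$ with the inner automorphism determined by $g^{-1}$). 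Hence $\rho_{s\cdot g}(x,\cdot)$ also factors through $\pi_\sigma$, and $s\cdot g$ is distinguished.

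One could package the argument even more briefly by noting that the assignment $\rho \mapsto g^{-1}\rho\, g$ is an automorphism of the relevant space of $G$-valued functions which carries the subset of functions of the form "(homomorphism through $\pi_\sigma$), independent of $x$" onto itself; applying it to $\rho_s$ yields $\rho_{s\cdot g}$. Since this is a purely formal manipulation, I do not expect any genuine difficulty in carrying it out.
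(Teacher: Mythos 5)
Your proof is correct and follows the paper's argument: both rely on specializing the transformation rule \eqref{eq:rho2} to the constant function $g(x)\equiv g$ to get $\rho_{s\cdot g}(x,t)=g^{-1}\rho_s(x,t)g$, after which both defining conditions of a distinguished section are visibly preserved under conjugation. You simply spell out the verification in more detail than the paper does.
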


\begin{proof} The assertion follows from the fact that the actions of $T$ and $G$ commute.
Note that $\rho_{s\cdot g}(x,t)\,=\, g^{-1} \rho_s(x,t) g$ by \eqref{eq:rho2}.
\end{proof}

\begin{lemma} \label{hka} Consider any linear action of the torus $K=(\mathbb{C}^*)^k $
on $\mathbb{C}^k$. Suppose $\mc{E}$ is a $ K $-equivariant algebraic principal $G$-bundle over    $\mathbb{C}^k$. Then $\mc{E}$ is $K$-equivariantly isomorphic to 
$\mathbb{C}^k \times \mc{E} (0)$, where the latter has diagonal $K$-action.  
\end{lemma}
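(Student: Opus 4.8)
The plan is to reduce the assertion to its holomorphic counterpart, Theorem \ref{hk}, and then to perform an algebraization step. First I would diagonalize the action: since $K=(\CC^*)^k$ is linearly reductive, the representation $\CC^k$ decomposes into one-dimensional weight spaces, so in suitable coordinates $t\cdot(z_1,\dots,z_k)=(\chi_1(t)z_1,\dots,\chi_k(t)z_k)$ for characters $\chi_i$ of $K$; in particular $0$ is a $K$-fixed point. As $0$ is fixed, $K$ acts on the $G$-torsor $\mc{E}(0)$ commuting with $G$, so fixing $e_0\in\mc{E}(0)$ the relation $t\cdot e_0=e_0\cdot c(t)$ defines an algebraic homomorphism $c: K\to G$ (it is a homomorphism because the $K$- and $G$-actions commute, as in Lemma \ref{lem:rho3}). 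Under $\mc{E}(0)\cong G$ this identifies $\CC^k\times\mc{E}(0)$ with its diagonal $K$-action with $\CC^k\times G$ carrying $t\cdot(z,g)=(tz,c(t)g)$, so the lemma becomes the claim that $\mc{E}$ is algebraically $K$-equivariantly isomorphic to this model bundle. In a local trivialization about $0$ this amounts, by \eqref{eq:rho2}, to solving $g(tx)=\rho(x,t)\,g(x)\,c(t)^{-1}$ for an algebraic $G$-valued function $g$ with $g(0)=e$, and then spreading the resulting trivialization over all of $\CC^k$.

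By Theorem \ref{hk} a holomorphic $K$-equivariant isomorphism $\mc{E}\cong\CC^k\times\mc{E}(0)$ already exists, and over the fixed point $\{0\}$ the identification $\mc{E}(0)\cong G$ is a tautological equivariant isomorphism of the two bundles; so the content is to build an algebraic isomorphism matching these to first order. I would build it by successive approximation along the maximal ideal $\mf{m}=(z_1,\dots,z_k)$ of $0$: given an equivariant isomorphism modulo $\mf{m}^{n}$, the obstruction to lifting it modulo $\mf{m}^{n+1}$ is a class in $H^{1}$ of the algebraic group $K$ with coefficients in the rational representation $(\mf{m}^{n}/\mf{m}^{n+1})\otimes\mf{g}$, where $\mf{g}$ carries the action $\mathrm{Ad}\circ c$; since $K$ is a torus, hence reductive, this cohomology vanishes, so the lift always exists. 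This produces a formal equivariant trivialization over the completion at $0$; combined with the holomorphic one from Theorem \ref{hk} it satisfies the hypotheses of an equivariant Artin approximation, yielding an honest algebraic equivariant trivialization on an \'etale neighbourhood of $0$.

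The main obstacle is the passage from a neighbourhood of $0$ to all of $\CC^k$. For this I would exploit that the scaling $\CC^*$-action on $\CC^k$ commutes with the diagonal $K$-action: pulling $\mc{E}$ back along the multiplication map $\mathbb{A}^1\times\CC^k\to\CC^k$, $(\lambda,x)\mapsto\lambda x$, produces a $K$-equivariant algebraic principal $G$-bundle over $\mathbb{A}^1\times\CC^k$ that restricts to $\mc{E}$ at $\lambda=1$ and to the model bundle $\CC^k\times\mc{E}(0)$ at $\lambda=0$. A relative $\mathbb{A}^1$-triviality statement for principal bundles of Bass--Quillen--Lindel type, applied $K$-equivariantly (with Theorem \ref{hk} and the formal computation above certifying the vanishing of the relevant obstructions), would then show that this family is $K$-equivariantly pulled back from the $\CC^k$ factor, so that its fibres at $\lambda=0$ and $\lambda=1$ are $K$-equivariantly isomorphic; this finishes the proof. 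Establishing this equivariant relative triviality in exactly the form needed is where I expect the real work to lie — everything preceding it is essentially formal once Theorem \ref{hk} is in hand.
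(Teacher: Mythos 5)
Your proposal has a genuine gap at exactly the point you flag yourself: the entire argument funnels into an unproven ``equivariant relative $\mathbb{A}^1$-triviality statement of Bass--Quillen--Lindel type'', and nothing in the proposal supplies it. The formal and \'etale-local analysis near $0$ (successive approximation along $\mf{m}$, vanishing of rational $H^1$ of the torus, equivariant Artin approximation) would at best give an equivariant algebraic trivialization on an \'etale neighbourhood of the origin; since $G$ is an arbitrary linear algebraic group, not assumed special, even upgrading that to Zariski-local triviality is not automatic, and the passage from a neighbourhood of $0$ to all of $\CC^k$ is left entirely to the unproved globalization step. As written, the proof does not close.

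The paper's argument is far shorter, and the observation you miss is that equivariance together with density of the open orbit makes the holomorphic trivialization \emph{automatically} algebraic. Apply Theorem \ref{hk} to get a $K$-equivariant holomorphic trivialization and let $s$ be the corresponding constant holomorphic section; its local action function is then an honest homomorphism $\rho_s\colon K\to G$, which is algebraic. For $x_0$ in the dense orbit $(\CC^*)^k$ one has $s(tx_0)=t\,s(x_0)\cdot\rho_s(t)^{-1}$, so $s$ is given by an explicitly algebraic formula on $(\CC^*)^k$; being holomorphic on all of $\CC^k$ and regular on a dense open subset, $s$ is regular everywhere. Hence the holomorphic equivariant trivialization is already an algebraic one, and no deformation-theoretic or $\mathbb{A}^1$-homotopy input is needed. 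Your initial reduction to the model bundle $\CC^k\times\mc{E}(0)$ via the homomorphism $c$ is consistent with the paper's setup; it is the algebraization step where the proposal breaks down.
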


\begin{proof} In particular, $\mc{E}$ is a $K$-equivariant holomorphic principal $G$ bundle.
Therefore, by Theorem \ref{hk} $\mc{E}$ admits a $K$-equivariant holomorphic trivialization,
$\mathbb{C}^k \times \mc{E}(0)$. Let $s$ be a holomorphic section of $\mc{E}$, which corresponds to a constant section of the 
trivialization $\mathbb{C}^k \times \mc{E}(0)$. Let $\rho_s: T \to G$ be the local action 
homomorphism corresponding to $s$. Let $x_0$ be any point in $(\mathbb{C}^{*})^k$.  Then 
$$ s(tx_0) = t s(x_0) \cdot \rho_s(t)^{-1}\,. $$ 
Since the homomorphism $\rho_s$ and the $T$-action are algebraic, the holomorphic section $s$ is algebraic over $(\mathbb{C}^{*})^k$.
This implies that $s$ is in fact algebraic over $\CC^k$. Thus $\mc{E}$ admits an algebraic  $K$-equivariant  trivialization. 
\end{proof}

\begin{lemma}\label{lem:cone1}
Let $\mathcal{E}$ be a $T$-equivariant
holomorphic (resp. algebraic) principal $G$-bundle on a nonsingular affine toric variety
$X_{\sigma}$. Then $\mathcal{E} $ is trivial and it admits a
distinguished section.
\end{lemma}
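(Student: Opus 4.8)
The plan is to reduce to the case of a linear action on $\CC^k$ and then invoke Lemma~\ref{hka}. Recall from section~\ref{sds} that $X_\sigma$ is $T$-equivariantly isomorphic to $A_\sigma \times O_\sigma$, where $A_\sigma$ is an affine toric variety whose dense orbit is $T_\sigma$, and that there are isomorphisms $a_\sigma: T_\sigma \to (\CC^*)^{\dim\sigma}$ and $\phi_\sigma: A_\sigma \to \CC^{\dim\sigma}$ intertwining the $T_\sigma$-actions. Since $O_\sigma \cong T/T_\sigma$ is itself a torus, it is affine and its orbit is all of $O_\sigma$; combining the two factors, $X_\sigma$ is $T$-equivariantly isomorphic, up to the lattice automorphism splitting $T = T_\sigma \times \iota_\sigma(O_\sigma)$, to a product $\CC^{\dim\sigma} \times O_\sigma$ carrying a linear $T$-action (linear on the first factor, translation on the second). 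Because any principal $G$-bundle over a torus $O_\sigma$ is trivial (e.g.\ $O_\sigma$ is a product of copies of $\CC^*$, and principal bundles for a linear algebraic group over $\CC^*$ are trivial in both the algebraic and holomorphic categories), one further reduces — after pulling back — to a $K$-equivariant bundle over $\CC^k$ with $K = (\CC^*)^k$ acting linearly, which is exactly the setting of Theorem~\ref{hk} (holomorphic case) and Lemma~\ref{hka} (algebraic case). Hence $\mathcal{E}$ is $T$-equivariantly trivial.

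Having established triviality, the second assertion — existence of a distinguished section — is obtained by transporting the constant section of the product trivialization back to $\mathcal{E}$ and applying Lemma~\ref{lem:equiv} to keep track of how $\rho_s$ transforms under the equivariant isomorphism and the lattice automorphism $a$. Concretely: on $\CC^k \times \mathcal{E}(0)$ with diagonal $K$-action, a constant section $s_0$ has local action function $\rho_{s_0}(x,t)$ equal to the (algebraic, equivalently holomorphic) homomorphism $K \to G$ describing the $K$-action on the fiber $\mathcal{E}(0)$; in particular it is independent of $x$ and is a genuine group homomorphism. Pulling this section back through the chain of equivariant isomorphisms, Lemma~\ref{lem:equiv} shows that the resulting section $s$ of $\mathcal{E}$ over $X_\sigma$ again has $\rho_s(x,\cdot)$ independent of $x$, now a homomorphism $T \to G$. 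Finally, one must check the factorization through $\pi_\sigma: T \to T_\sigma$: this holds because, under the decomposition $T = T_\sigma \times \iota_\sigma(O_\sigma)$, the factor $\iota_\sigma(O_\sigma)$ acts on $X_\sigma = A_\sigma \times O_\sigma$ only by translation of the second coordinate and trivially on $A_\sigma$, so after choosing the trivialization coming from $A_\sigma \cong \CC^{\dim\sigma}$ the subtorus $\iota_\sigma(O_\sigma)$ acts trivially on the relevant fiber, forcing $\rho_s|_{\iota_\sigma(O_\sigma)}$ to be trivial and hence $\rho_s$ to factor through $\pi_\sigma$.

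The main obstacle I expect is the bookkeeping in the reduction step — specifically, making precise the $T$-equivariant identification of $X_\sigma$ with a product on which $T$ acts through a linear action composed with translation, and ensuring that the equivariant Oka principle (Theorem~\ref{hk}) or its algebraic toric analogue (Lemma~\ref{hka}) applies to the $\CC^k$-factor while the bundle over the torus factor is handled separately and the two are glued $T$-equivariantly. One has to be careful that the lattice automorphism $a$ introduced by the splitting $\iota_\sigma$ is benign: Lemma~\ref{lem:equiv} is designed precisely for this, giving $\rho_{s_1}(x,t) = \rho_{s_2}(\alpha(x), a(t))$, so an $x$-independent homomorphic $\rho_{s_2}$ pulls back to an $x$-independent homomorphic $\rho_{s_1}$, and factorization through $\pi_\sigma$ is preserved because $a$ respects the direct-product decomposition of $T$. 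The remaining verifications are routine applications of Lemmas~\ref{lem:rho3}, \ref{lem:rho5}, and~\ref{lem:equiv}.
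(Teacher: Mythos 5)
Your overall skeleton matches the paper's: decompose $X_\sigma \cong A_\sigma \times O_\sigma$, apply the equivariant Oka principle (Theorem~\ref{hk}, resp.\ Lemma~\ref{hka}) to the factor $A_\sigma \cong \CC^{\dim\sigma}$ with its linear $T_\sigma$-action, and read off a distinguished section from the constant section of the resulting product trivialization. But the step where you handle the torus factor $O_\sigma$ contains a genuine gap, and the auxiliary fact you invoke to fill it is false. You claim that any principal $G$-bundle over the torus $O_\sigma$ is trivial because ``principal bundles for a linear algebraic group over $\CC^*$ are trivial in both the algebraic and holomorphic categories.'' This fails for general linear algebraic $G$: for $G$ finite (say $\ZZ/2\ZZ$) the squaring map $\CC^* \to \CC^*$ is a nontrivial principal $G$-bundle, and for $G = \mathrm{PGL}_r$ the nontrivial Brauer classes of $(\CC^*)^2$ yield nontrivial principal bundles over a two-dimensional torus. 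Moreover, even granting non-equivariant triviality over $O_\sigma$, the ``gluing'' of the trivializations over the two factors into a single $T$-equivariant trivialization of $\mathcal{E}$ over the product is precisely the nontrivial content of the lemma; you flag it as ``the main obstacle'' but do not resolve it.

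The paper's resolution needs no triviality theorem over the torus at all: restrict $\mathcal{E}$ to the slice $A_\sigma \times \{y_0\}$, trivialize it $T_\sigma$-equivariantly by Theorem~\ref{hk} (resp.\ Lemma~\ref{hka}), let $s$ be the section corresponding to a constant section there, and then \emph{extend} $s$ over all of $X_\sigma$ by translating with the complementary subtorus, setting $s(x, k y_0) := \iota_\sigma(k)\, s(x, y_0)$. Since $\iota_\sigma(O_\sigma)$ acts simply transitively on the $O_\sigma$-factor, this is a well-defined global section, proving triviality; and because the section satisfies $\iota_\sigma(k)\, s(x,y) = s(x, ky)$ by construction, a short computation using that $T_\sigma$ and $\iota_\sigma(O_\sigma)$ commute gives $\rho_s((x,y), h\,\iota_\sigma(k)) = \rho(h)$, so $\rho_s$ is independent of the base point and factors through $\pi_\sigma$. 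This translation step is what your write-up is missing; your closing remark that $\iota_\sigma(O_\sigma)$ ``acts trivially on the relevant fiber'' presupposes exactly the trivialization that this step constructs.
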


\begin{proof} Recall that $X_{\sigma}\,=\, A_{\sigma}\times O_{\sigma}$, where $A_{\sigma}$ is isomorphic
to $\CC^{\dim{\sigma}}$ and $T_{\sigma}$ acts linearly on $A_{\sigma}$. Then by Theorem \ref{hk}
(resp. Lemma \ref{hka}), the restriction $\mathcal{E}|_{A_{\sigma}}$ is
$T_{\sigma}$-equivariantly isomorphic to $A_{\sigma} \times \mc{E}(p)$, where $p$ is
the $T_{\sigma}$-fixed point of $A_{\sigma}$. Here the $T_{\sigma}$-action on the product $A_{\sigma} \times \mc{E}(p)$ is the natural diagonal
action. Take any $e \in \mc{E}(p)$. Let $\rho: T_{\sigma} \to G$ be the homomorphism induced by the action of $T_{\sigma}$ on $\mc{E}(p)$ defined by
$tz\,=\, z\cdot \rho(t)$.

Let $s$ be the section of $\mathcal{E}\vert_{A_{\sigma}}$ corresponding to the constant section $e$ of
$A_{\sigma} \times \mc{E}(p)$. Then $\rho_s = \rho$ by the definition of $s$.

Let $y_0$ be the identity element in $O_{\sigma}$. We may identify $A_{\sigma}$ with $A_{\sigma} \times\{y_0\}$.
Now extend $s$ to a section of $\mathcal{E}$ over $ X_{\sigma}$ by setting
\begin{equation}\label{eq:extn}
 s(x,k y_0)\,=\, \iota_{\sigma}(k) s(x,y_0)
\end{equation}
for every $x\,\in\, A_{\sigma}$ and $k \,\in\, O_{\sigma}$. This shows that
$\mathcal{E}$ is trivial over $X_{\sigma}$.

The local action function  of the section $s$ over $A_{\sigma}
\times \{y_0\}$ satisfies $$ \rho_{s}((x,y_0),h) =
\rho_{s}((p,y_0),h) =\rho(h) $$
 for all $h \in T_{\sigma}$. Since
$$ \begin{array}{l}
h s(x,k y_0) = h \iota_{\sigma}(k) s(x,y_0) = \iota_{\sigma}(k) h s(x, y_0) \\
 = \iota_{\sigma}(k) s(hx,y_0) \cdot
\rho_{s}((x,y_0),h) = s(hx,ky_0) \cdot \rho_{s}((x,y_0),h),
\end{array} $$
we deduce that
\begin{equation}\label{con}
\rho_{s}((x,ky_0),h) = \rho_{s}((x,y_0),h)
\end{equation}
for every $k\in O_{\sigma}$ and every $h \in T_{\sigma}$.

It follows easily from \eqref{eq:extn} that $\iota_{\sigma}(k) s(x,y) = s(x,ky)$
for any $y \in O_{\sigma}$. Then, using \eqref{con}, we have
$$ \begin{array}{l}
h \iota_{\sigma}(k) s(x,y) = h s(x,ky) = s(hx, ky) \cdot \rho_s((x,ky),h) \\
= s(hx, ky) \cdot \rho_s((x,y_0),h) = s(hx, ky) \cdot
\rho_s((p,y_0),h) \end{array} $$
for every $(h, \iota_{\sigma}(k)) \in T$ and $(x,y) \in A_{\sigma} \times
O_{\sigma} =X_{\sigma} $. Therefore,
$$
\rho_s((x,y), h \iota_{\sigma}(k) ) \,=\, \rho_s((p,y_0), h) \,=\,\rho(h)\, ,
$$
and the lemma follows.
\end{proof}

\begin{lemma}\label{welld} Let $\sigma$ be a cone. Then the homomorphisms $\rho_{s}: T\to G$, induced by different
 distinguished sections $s$ of $\mc{E}\vert_{X_{\sigma}}$, are equal up to conjugation by elements of $G$.
\end{lemma}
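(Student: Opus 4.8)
The plan is to relate two distinguished sections by the unique $G$-valued transition function between them, and then play the two defining properties of ``distinguished'' off against each other. Since $\mc{E}\vert_{X_\sigma}$ is trivial by Lemma \ref{lem:cone1}, any two distinguished sections $s$ and $s'$ of $\mc{E}\vert_{X_\sigma}$ satisfy $s'(x) = s(x)\cdot g(x)$ for a unique holomorphic (resp. algebraic) map $g : X_\sigma \to G$. Formula \eqref{eq:rho2} then gives the pointwise identity $\rho_{s'}(x,t) = g(tx)^{-1}\,\rho_s(x,t)\,g(x)$ for all $x \in X_\sigma$ and $t \in T$. Because $s$ and $s'$ are distinguished, the left-hand side and the middle factor $\rho_s(x,t)$ are independent of $x$; write them $\rho_{s'}(t)$ and $\rho_s(t)$, so that $\rho_{s'}(t) = g(tx)^{-1}\,\rho_s(t)\,g(x)$ holds for every $x \in X_\sigma$.

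Next I would evaluate this identity at a point $x_0 \in X_\sigma$ fixed by the whole stabilizer $T_\sigma$. Such a point exists: under the isomorphism $X_\sigma \cong A_\sigma \times O_\sigma$ of Section \ref{sds}, the group $T_\sigma$ acts linearly on $A_\sigma$ with fixed point $p$, and it acts trivially on $O_\sigma$ (since $O_\sigma \cong T/T_\sigma$ and $T$ is abelian), so $x_0 = (p,y_0)$ with $y_0 \in O_\sigma$ arbitrary is $T_\sigma$-fixed. For $h \in T_\sigma$ we then have $h x_0 = x_0$, and the identity collapses to $\rho_{s'}(h) = g(x_0)^{-1}\,\rho_s(h)\,g(x_0)$. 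Hence the restrictions of the homomorphisms $\rho_s$ and $\rho_{s'}$ to $T_\sigma$ are conjugate by the single element $g(x_0) \in G$.

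Finally I would upgrade this from $T_\sigma$ to all of $T$ using the second property of a distinguished section: both $\rho_s$ and $\rho_{s'}$ factor through $\pi_\sigma : T \to T_\sigma$, and since $\pi_\sigma$ restricts to the identity on $T_\sigma$ one has $\rho_s(t) = \rho_s(\pi_\sigma(t))$ and likewise for $s'$. Therefore, for an arbitrary $t \in T$,
\[
\rho_{s'}(t) \,=\, \rho_{s'}(\pi_\sigma(t)) \,=\, g(x_0)^{-1}\,\rho_s(\pi_\sigma(t))\,g(x_0) \,=\, g(x_0)^{-1}\,\rho_s(t)\,g(x_0),
\]
which is the assertion, with the conjugating element exhibited explicitly as the value $g(x_0)$ of the transition function at the chosen basepoint.

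The argument is short; the only point that needs care is the choice of basepoint. It is essential to evaluate the transition identity at a $T_\sigma$-fixed point of $X_\sigma$ rather than at a general point (at an arbitrary $x$ one obtains only $\rho_{s'}(t) = g(tx)^{-1}\rho_s(t)g(x)$, which relates values of $g$ at two different points and does not immediately give conjugation), and this must be combined with the $\pi_\sigma$-factorization in order to descend the equality on $T_\sigma$ back up to all of $T$. I would organize the write-up around these two observations.
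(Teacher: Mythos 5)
Your proof is correct and follows essentially the same route as the paper: both arguments evaluate the relation $\rho_{s'}(x,t)=g(tx)^{-1}\rho_s(x,t)g(x)$ from \eqref{eq:rho2} at the $T_\sigma$-fixed point $x_\sigma=(p,y_0)$, where it collapses to conjugation by the single element $g(x_\sigma)$, and then use the factorization through $\pi_\sigma$ to pass from $T_\sigma$ to all of $T$. The two points you flag as needing care are exactly the two ingredients the paper's proof uses.
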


\begin{proof} Take a distinguished section $s$. Let $x_{\sigma}=(p,y_0)$ be the fixed point of $T_{\sigma}$
 in $X_{\sigma}$ corresponding to the identity element of $O_{\sigma}$.
 Then for any $x \in X_{\sigma}$,  $\rho_{s}(x,t)\,=\,
\rho_{s}(x_{\sigma},t)$.
  Note that $$\rho_{s}(x,t)\,=\,
\rho_{s}(x_{\sigma},t) =\, \rho_s(x_{\sigma}, \pi_{\sigma}(t)  ) \, , $$ since $s$ is a distinguished section. 
Moreover, 
$$\pi_{\sigma}(t) s( x_{\sigma})   = s ( x_{\sigma}) \cdot \rho_s(x_{\sigma}, \pi_{\sigma}(t)) \,.$$
Therefore, $\rho_s$
is completely determined by $s(x_{\sigma})$ and the canonical
$T_{\sigma}$ action on $\mc{E}(x_{\sigma})$. If $s'$ is another distinguished section, then
there exists $g\,\in\, G$ such that $s'(x_{\sigma}) \,=\, s(x_{\sigma}) \cdot g$. 
 Then $\rho_{s'} \,=\, g^{-1} \rho_{s} g$ follows by applying equation \eqref{eq:rho2} with $x=x_{\sigma}$ , $t \in T_{\sigma}$, and $g(x_{\sigma})= g$.    Note that
by Lemma \ref{ds2} all elements of $G$ appear this way for distinguished sections.
\end{proof}

\begin{defn} For notational convenience, denote by $\rho_{\sigma}$
a homomorphism from $T$ to $G$ induced by a
distinguished section of $\mc{E}\vert_{X_{\sigma}}$. By Lemma \ref{welld}, the
homomorphism $\rho_{\sigma}$ is unique up to conjugation by elements of $G$.
\end{defn}

\begin{lemma}\label{aut} Let $\mc{E}$ be a $T$-equivariant holomorphic (resp. algebraic)  principal $G$-bundle over a nonsingular toric variety $X$ (not necessarily affine).
Let $\Aut_{_{T}}(\mc{E})$ be the group of $T$-equivariant automorphisms
of $\mc{E}$ that project to the identity map on $X$. Then $\Aut_{_{T}}(\mc{E})$  is a subgroup of $G$ that contains the center $Z(G)$ of $G$.
\end{lemma}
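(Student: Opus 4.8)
The plan is to identify $\Aut_{_{T}}(\mc{E})$ with a subgroup of $G$ by evaluating an equivariant automorphism at a single point of $\mc{E}$ lying over the principal orbit. Let $O\cong T$ be the principal orbit in $X$; it is open and dense. Fix a point $x_0\in O$ and a point $z_0$ in the fibre $\mc{E}_{x_0}$. Any $\phi\in\Aut_{_{T}}(\mc{E})$ satisfies $\pi\circ\phi=\pi$, so it maps the fibre $\mc{E}_{x_0}$ to itself; since $G$ acts freely and transitively on $\mc{E}_{x_0}$, there is a unique $\Phi(\phi)\in G$ with
$$\phi(z_0)\,=\, z_0\cdot\Phi(\phi)\,.$$
For $\phi_1,\phi_2\in\Aut_{_{T}}(\mc{E})$ one computes $(\phi_1\circ\phi_2)(z_0)=\phi_1(z_0\cdot\Phi(\phi_2))=\phi_1(z_0)\cdot\Phi(\phi_2)=z_0\cdot\Phi(\phi_1)\Phi(\phi_2)$, using that $\phi_1$ commutes with the $G$-action; hence $\Phi\colon\Aut_{_{T}}(\mc{E})\to G$ is a group homomorphism.

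Next I would prove that $\Phi$ is injective. Every point of $\pi^{-1}(O)$ can be written as $t z_0\cdot g$ with $t\in T$ and $g\in G$, since $T$ acts transitively on $O$ and $G$ acts transitively on each fibre. Using that $\phi$ is $T$-equivariant and $G$-equivariant and that the two actions commute,
$$\phi(t z_0\cdot g)\,=\, t\,\phi(z_0)\cdot g\,=\,(t z_0)\cdot\Phi(\phi)\,g\,,$$
so $\phi$ is completely determined on $\pi^{-1}(O)$ by the element $\Phi(\phi)$. As $\pi$ is an open map and $O$ is dense in $X$, the set $\pi^{-1}(O)$ is dense in $\mc{E}$; since $\phi$ is continuous (and in the algebraic case $\mc{E}$ is a variety, hence reduced), $\phi$ is determined on all of $\mc{E}$ by its restriction to $\pi^{-1}(O)$. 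Thus $\Phi(\phi)=e$ implies $\phi=\mathrm{id}$, so $\Phi$ is injective and $\Aut_{_{T}}(\mc{E})\cong\img(\Phi)$, a subgroup of $G$.

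It remains to see that $Z(G)\subseteq\img(\Phi)$. Given $c\in Z(G)$, define $\phi_c\colon\mc{E}\to\mc{E}$ by $\phi_c(z)=z\cdot c$. This is a bundle map over the identity of $X$, and it commutes with the $G$-action because $c$ is central: $\phi_c(z\cdot g)=z\cdot gc=z\cdot cg=\phi_c(z)\cdot g$. It is $T$-equivariant because the $T$- and $G$-actions commute: $\phi_c(tz)=tz\cdot c=t(z\cdot c)=t\,\phi_c(z)$. Since $\phi_c(z_0)=z_0\cdot c$, we have $\Phi(\phi_c)=c$, so $Z(G)\subseteq\img(\Phi)$, which completes the proof.

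I expect the only delicate point to be the rigidity used in the injectivity step, namely that an equivariant bundle automorphism of $\mc{E}$ is forced to be determined by its value at a single point over the open orbit. This rests on nothing more than the density of $\pi^{-1}(O)$ in $\mc{E}$ (which holds because $\pi$ is open and $O$ is dense) together with continuity of $\phi$ in the holomorphic case and reducedness of $\mc{E}$ in the algebraic case; all the remaining verifications are immediate consequences of the freeness and transitivity of the $G$-action on fibres and the commutativity of the $T$- and $G$-actions.
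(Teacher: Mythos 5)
Your proof is correct, and its overall skeleton matches the paper's: evaluate an equivariant automorphism at a single point $z_0$ over the principal orbit to get a map $\Phi\colon \Aut_{_{T}}(\mc{E})\to G$, check it is an injective group homomorphism using density of $\pi^{-1}(O)$ and continuity, and then exhibit every central element in the image. The one place where you genuinely diverge is the last step. The paper constructs the candidate automorphism for $g\in Z(G)$ only over the open orbit, via a distinguished section $s$ (setting $\Phi(te\cdot h)=te\cdot gh$ and using centrality to see $\Phi(s(tx_0))=s(tx_0)\cdot g$), and then has to argue separately that this extends holomorphically (resp.\ algebraically) across the boundary strata by taking limits along one-parameter subgroups. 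You instead observe that for central $c$ the map $\phi_c(z)=z\cdot c$ is already globally defined on all of $\mc{E}$, is holomorphic/algebraic because the $G$-action is, commutes with the $G$-action precisely because $c$ is central, and is $T$-equivariant because the two actions commute. This sidesteps the extension-to-the-boundary argument entirely and is cleaner; in fact the paper's map agrees with yours on the open orbit (since $gh=hg$ for central $g$), so your construction subsumes theirs. The injectivity step is fine as written: $\pi$ is open, so $\pi^{-1}(O)$ is dense in $\mc{E}$, and an automorphism is pinned down there by $T$- and $G$-equivariance together with its value at $z_0$.
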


\begin{proof}
Fix a point $x_0 \,\in\, O \,\subset\, X$ and also an element $e \,\in\, \mc{E}(x_0)$.
Consider the map
$$\alpha\,:\, \Aut_{_{T}}(\mc{E}) \,\to\, G$$
uniquely determined by the equation $\Phi(e) \,=\, e \cdot \alpha(\Phi)$.
Note that given any $\Phi\,\in\, \Aut_{_{T}}(\mc{E})$, by its
$T$-equivariance and continuity, $\Phi$ is determined by the restriction
$\Phi\vert_{\mc{E}(x_0)}$, while $\Phi\vert_{\mc{E}(x_0)}$ in turn is determined
by $\Phi(e)$ using $G$-equivariance.  Therefore the above map $\alpha$ is injective.
For $\Phi_1\,\in\, \Aut_{_{T}}(\mc{E})$, we have
$$
\Phi_2 \circ \Phi_1 (e) \,=\, \Phi_2 ( e \cdot \alpha(\Phi_1))
\,=\, \Phi_2 (e) \cdot \alpha(\Phi_1)\,=\, e \cdot \alpha(\Phi_2) \alpha(\Phi_1)\, .
$$
This proves that $\alpha$ is a group homomorphism.

Now take any $g \,\in\, Z(G)$. Choose a distinguished section $s$ of
$\mc{E}_{\sigma}$ such that $s(x_0)\,=\,e$. Define $\Phi(e) \,=\, e\cdot
g$. Then $\Phi$ determines  an automorphism of $\mc{E}\vert_O$ using $T$
and $G$ equivariance as follows: $ \Phi( te\cdot h) \,=\, t e \cdot gh$. We need to
check that $\Phi$ extends to the boundary strata of $X$, which consist of
lower dimensional $T$-orbits.
Note that
$$ \Phi(te) \,=\, \Phi(ts(x_0))\,=\, t s(x_0) \cdot g\,=\, s(tx_0) \cdot \rho_s(t) g\, . $$
On the other hand,
$$ \Phi(te)\,= \,\Phi(ts(x_0)) \,=\, \Phi (s(tx_0) \cdot \rho_s(t) )\,  = \, \Phi (s(tx_0)) \cdot \rho_s(t) \, .$$
Therefore,
$$ \Phi(s(tx_0))\,=\, s(tx_0) \cdot \rho_s(t)g \rho_s(t)^{-1} \,=\, s(tx_0) \cdot g$$
as $g \in Z(G)$.
 Any  point $x$ in a boundary stratum can be considered as a limit
$$ \lim_{z\to 0} \lambda(z) x_0\, ,$$ where
$\lambda(z)$ is an algebraic curve in $T$. Now since $s$ and the $G$-action are holomorphic (resp. algebraic),
$\Phi(s(x)) = s(x) \cdot g $ defines an analytic (resp. algebraic) extension of $\Phi$ over $X$.
\end{proof}

\section{Isomorphism classes and admissible collections}\label{sec3}

Let $X$ be a nonsingular toric variety of dimension $n$
corresponding to a fan $\Xi$. Let $\mathcal{E}$ be a holomorphic  (resp. algebraic)
$T$-equivariant principal $G$-bundle over $X$.
Define $\mathcal{E}_{\sigma} \,:=\,
\mathcal{E}\vert_{X_{\sigma}}$ for any cone $\sigma \in \Xi$.

Let $\sigma$ be a maximal
cone in $\Xi$, and let $s_{\sigma}$ be a distinguished section of
$\mc{E}_{\sigma}$.  Let
$\rho_{\sigma}\,:\,T \,\to \,G$ be the corresponding homomorphism.
The group $T$ acts on $X_{\sigma} \times G$ as follows:
\begin{equation}\label{taction}
t(x,h) \,=\, (tx, {\rho}_{\sigma}(t)h )\, .
\end{equation}

Let $\displaystyle{\psi_{\sigma}: \mathcal{E}_{\sigma} \to
X_{\sigma} \times G}$ be the trivialization of $\mc{E}_{\sigma}$ defined by
\begin{equation}\label{tr1}
\psi_{\sigma} (s_{\sigma}(x) \cdot h ) \,=\, (x, h)\, .
\end{equation}
Note that
\begin{equation}\label{tr2}
\psi_{\sigma} (t s_{\sigma}(x) \cdot h ) \,=\,
\psi_{\sigma}(s_{\sigma}(tx) \cdot \rho_{\sigma}(t)h )\,=\, (tx, {\rho}_{\sigma}(t) h\, .
\end{equation}
Hence by \eqref{taction} and \eqref{tr2} we get that $\psi_{\sigma}$ in
\eqref{tr1} is a $T$-equivariant trivialization.

Let $\sigma$, $\tau$ be any two maximal cones. Define
$\phi_{\tau \sigma}\,: X_{\sigma} \cap X_{\tau} \,\to\, G$ as follows:
\begin{equation}\label{phits2}
s_{\sigma}(x) \,=\, s_{\tau}(x) \cdot \phi_{\tau\sigma}(x) \,.
\end{equation}
The maps $\phi_{\tau \sigma}$'s are transition functions for $\mathcal {E}$ as they satisfy
\begin{equation}\label{tran}
\psi_{\tau}\psi_{\sigma}^{-1} (x,h)\,=\, (x, \phi_{\tau \sigma}(x) h)\, .
\end{equation}
By the $T$-equivariance property of $\mathcal E$ we have
\begin{equation}\label{tt3}
t (\psi_{\tau} \psi_{\sigma}^{-1}(x, h)) \,=\,
\psi_{\tau} \psi_{\sigma}^{-1} (t(x,h))\, .
\end{equation}
Using \eqref{taction} and \eqref{tran}, from \eqref{tt3} it follows that
$$ (tx, {\rho}_{\tau}(t) \phi_{\tau \sigma}(x) h ) = (tx, \phi_{\tau \sigma}(tx){\rho}_{\sigma}(t)h ). $$
Therefore, we have $\phi_{\tau \sigma}(tx)\,=\, {\rho}_{\tau}(t) \phi_{\tau \sigma}(x)
{\rho}_{\sigma}(t)^{-1}$.

Fix a point $x_0$ in the principal orbit
$O$. Denote $\phi_{\tau\sigma}(x_0)$ by $P(\tau,\sigma)$. Note that ${\rho}_{\tau}(t) P(\tau, \sigma)
{\rho}_{\sigma}(t)^{-1}$ extends to a holomorphic (resp. algebraic) function, namely $\phi_{\tau\sigma}$, on $X_{\tau \cap \sigma}$.
Moreover, since $\phi_{\tau\sigma}$ are transition functions, $\{P(\tau,\sigma)\}$ satisfy
the cocycle condition
$$ P(\tau,\sigma) P(\sigma, \delta) P(\delta, \tau) \,=\, 1_G\, .$$
Finally, if we change $\{s_{\sigma}\}$ to $\{s_{\sigma} \cdot g_{\sigma}\}$, then
$\{\rho_{\sigma}\}$ transforms to $\{ g_{\sigma}^{-1} \rho_{\sigma} g_{\sigma} \}$, while
$\{P(\tau,\sigma)\}$ transforms to $\{g_{\tau}^{-1} P(\tau,\sigma) g_{\sigma}\}$.

\begin{defn} Let $\Xi^*$ denote the set of maximal cones in $\Xi$. An admissible collection $\{\rho_{\sigma}, P(\tau,\sigma) \}$
consists of a collection of homomorphisms $\{\rho_{\sigma}:T \to G \mid \sigma \in \Xi^* \}$ and a collection of elements
$\{ P(\tau,\sigma) \in G \mid \tau, \sigma \in \Xi^*  \} $ satisfying the
following conditions:
\begin{enumerate} \item $\rho_{\sigma}$ factors through $\pi_{\sigma}: T \to
T_{\sigma}$.

 \item For every pair $(\tau,\sigma)$ of maximal
cones, $ \rho_{\tau} P(\tau, \sigma) \rho_{\sigma}^{-1}$ extends to a $G$-valued
holomorphic (equivalently regular algebraic) function over $X_{\sigma} \cap X_{\tau}$.

\item $P(\sigma, \sigma) = 1_G$ for all $\sigma$.

\item  For every triple $(\tau, \sigma, \delta)$ of
 maximal cones having nonempty intersection, the cocycle condition
$ P(\tau,\sigma) P(\sigma, \delta)$  $P(\delta, \tau) \,=\, 1_G$ holds.
\end{enumerate}

Two such admissible collections $\{\rho_{\sigma}, P(\tau,\sigma) \}$ and
$\{\rho'_{\sigma}, P'(\tau,\sigma) \}$  are equivalent if the following hold:
\begin{enumerate}
\item[(i)] For every $\sigma$ there exists $g_{\sigma}\in G$ such
that $\rho'_{\sigma} \,=\, g_{\sigma}^{-1} \rho_{\sigma} g_{\sigma}$.

\item[(ii)] For every pair $(\tau,\sigma)$, $P'(\tau,\sigma) \,= \,
g_{\tau}^{-1} P(\tau,\sigma) g_{\sigma}$, where $g_{\sigma}$ and $g_{\tau}$ are as in
(i) above.
\end{enumerate}
\end{defn}

\begin{theorem}\label{class} Let $X$ be a nonsingular toric variety and $G$ a complex
linear algebraic group. Then the isomorphism classes of holomorphic (or algebraic) $T$-equivariant
principal $G$-bundles on $X$ are in one-to-one correspondence with equivalence classes of
admissible collections $\{\rho_{\sigma}, P(\tau,\sigma) \}$.
\end{theorem}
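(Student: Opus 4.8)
The plan is to construct two maps between the set of isomorphism classes of $T$-equivariant principal $G$-bundles and the set of equivalence classes of admissible collections, and then show they are mutually inverse. Most of the work of one direction has already been assembled in the preceding pages: given a bundle $\mc{E}$, for each maximal cone $\sigma$ choose a distinguished section $s_\sigma$ of $\mc{E}_\sigma$ (which exists by Lemma \ref{lem:cone1}), let $\rho_\sigma$ be the associated homomorphism, and set $P(\tau,\sigma)=\phi_{\tau\sigma}(x_0)$ where $\phi_{\tau\sigma}$ is defined by \eqref{phits2}. The computations already in the text show this pair satisfies conditions (1)--(4): (1) is the distinguished-section property, (2) follows because $\rho_\tau(t)P(\tau,\sigma)\rho_\sigma(t)^{-1}=\phi_{\tau\sigma}(tx_0)$ extends to the holomorphic/algebraic transition function $\phi_{\tau\sigma}$ on $X_{\tau\cap\sigma}$, (3) is immediate from \eqref{phits2}, and (4) is the cocycle identity for transition functions. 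The displayed transformation rule at the end of section \ref{sec3} shows that changing the choices $s_\sigma\mapsto s_\sigma\cdot g_\sigma$ replaces the collection by an equivalent one; I still need to observe that the class is independent of the choice of $x_0\in O$ (two choices differ by translation by some $t\in T$, and $\phi_{\tau\sigma}(tx_0)=\rho_\tau(t)\phi_{\tau\sigma}(x_0)\rho_\sigma(t)^{-1}$ is conjugation-equivalent only after noting one can absorb $\rho_\sigma(t)$ — actually here one uses that $\rho_\tau(t)$ and $\rho_\sigma(t)$ need not agree, so I must argue more carefully, perhaps by instead fixing $x_0$ once and for all globally, which is legitimate). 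Finally, an isomorphism $\mc{E}\cong\mc{E}'$ of equivariant bundles carries distinguished sections to distinguished sections, and by Lemma \ref{lem:equiv} (with $\alpha=\mathrm{id}$, $a=\mathrm{id}$) the resulting data agree, so the map on isomorphism classes is well-defined.

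For the reverse direction, given an admissible collection $\{\rho_\sigma,P(\tau,\sigma)\}$, I build a bundle by gluing. On each $X_\sigma\times G$ put the $T$-action \eqref{taction}, $t(x,h)=(tx,\rho_\sigma(t)h)$; condition (1) guarantees $\rho_\sigma$ is a genuine homomorphism and the action is well-defined. For a pair $(\tau,\sigma)$ define the transition function $\phi_{\tau\sigma}:X_{\sigma}\cap X_{\tau}\to G$ to be the holomorphic/algebraic extension of $t\mapsto \rho_\tau(t)P(\tau,\sigma)\rho_\sigma(t)^{-1}$ promised by condition (2); here I need to check this extension is unique, which follows from the density of $O$ in $X_{\tau\cap\sigma}$, and that $\phi_{\sigma\sigma}=1_G$ by condition (3). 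The cocycle condition $\phi_{\tau\sigma}\phi_{\sigma\delta}\phi_{\delta\tau}=1_G$ holds on the dense orbit by condition (4) and the homomorphism property, hence everywhere by continuity/density; note I should check it on triple intersections $X_\tau\cap X_\sigma\cap X_\delta$ which are again toric (affine) and contain $O$, so the density argument applies. Glue the trivial bundles $X_\sigma\times G$ along these $\phi_{\tau\sigma}$ to get a principal $G$-bundle $\mc{E}$; the $T$-actions \eqref{taction} are compatible with the gluing precisely because $\phi_{\tau\sigma}(tx)=\rho_\tau(t)\phi_{\tau\sigma}(x)\rho_\sigma(t)^{-1}$, so they descend to a $T$-action on $\mc{E}$ making it $T$-equivariant, and one verifies the compatibility $t(z\cdot g)=(tz)\cdot g$. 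If one replaces the collection by an equivalent one via $\{g_\sigma\}$, the maps $(x,h)\mapsto(x,g_\sigma h)$ on each chart assemble into a $T$-equivariant isomorphism of the glued bundles, so the construction descends to equivalence classes.

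It then remains to check the two constructions are mutually inverse. Starting from an admissible collection, gluing to get $\mc{E}$, and reading off the collection again: the constant sections $s_\sigma(x)=$ the class of $(x,1_G)$ are distinguished with associated homomorphism exactly $\rho_\sigma$ (by \eqref{taction}), and $s_\sigma=s_\tau\cdot\phi_{\tau\sigma}$ gives back $P(\tau,\sigma)=\phi_{\tau\sigma}(x_0)$ provided $x_0$ is the chosen base point at which $\phi_{\tau\sigma}(x_0)=P(\tau,\sigma)$ — so consistency of the global choice of $x_0$ across both directions is the bookkeeping point to get right. Conversely, starting from a bundle $\mc{E}$, extracting $\{\rho_\sigma,P(\tau,\sigma)\}$, and regluing: the trivializations $\psi_\sigma$ of \eqref{tr1} are $T$-equivariant isomorphisms $\mc{E}_\sigma\to X_\sigma\times G$ intertwining the actions, and \eqref{tran} identifies the transition functions $\psi_\tau\psi_\sigma^{-1}$ with the $\phi_{\tau\sigma}$ used in the regluing, so the $\psi_\sigma$ patch to a global $T$-equivariant isomorphism from $\mc{E}$ to the reglued bundle. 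I expect the main obstacle to be purely organizational rather than deep: namely handling the choice of base point $x_0$ and the projections $\pi_\sigma$ consistently, and being careful that all density/continuity arguments used to propagate identities from the principal orbit $O$ to the affine pieces $X_{\sigma\cap\tau}$ (and triple intersections) are valid — these are all toric and contain $O$ as a dense open subset, so this is fine, but it needs to be said. Everything else is a direct assembly of Lemmas \ref{lem:cone1}, \ref{welld}, \ref{lem:equiv}, and the explicit computations already carried out in section \ref{sec3}.
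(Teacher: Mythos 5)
Your proposal is correct and follows essentially the same route as the paper: extract $\{\rho_\sigma, P(\tau,\sigma)\}$ from distinguished sections (using Lemmas \ref{lem:cone1}, \ref{welld}, \ref{lem:equiv}), reconstruct the bundle by gluing $X_\sigma\times G$ with the $T$-action \eqref{taction} and the density argument for $\phi_{\tau\sigma}(tx)\rho_\sigma(t)=\rho_\tau(t)\phi_{\tau\sigma}(x)$, and verify the two assignments are mutually inverse via the trivializations $\psi_\sigma$. The bookkeeping points you flag (fixing $x_0$ globally, propagating identities from $O$ by density) are handled exactly the same way in the paper's proof.
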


\begin{proof}
Let $\mathcal {E}_1$ and $\mathcal E_2$ be two $T$-equivariant principal $G$-bundles
on $X$ which are $T$-equivariantly isomorphic. Fix such an isomorphism $\Phi\,:\,
\mathcal {E}_1\,\to\, \mathcal E_2$. Let $\{s^1_{\sigma}\}$ be a collection of
distinguished sections which associates a
class of admissible collections $\{\rho_{s^{1}_\sigma},  P^{1}(\tau,\sigma) \}$ to 
$\mc{E}_1$. Let
$\displaystyle{s^{2}_{\sigma}\,=\,\Phi \circ s^{1}_{\sigma}}$.  Note that
$\displaystyle{\rho_{s^{1}_\sigma}(x,t)\,=\,\rho_{s^{2}_\sigma}(x,t)}$ by Lemma
\ref{lem:equiv}. Hence the collection $\{s^{2}_{\sigma}\}$ is also distinguished.
Then by $\eqref{phits2}$ and the $G$-equivariance property of $\Phi$ we get that
$$ s^{2}_{\sigma}(x_0)\,=\,\Phi (s^{1}_{\sigma}(x_0) ) \,=\,
\Phi(s^{1}_{\tau}(x_0)) \cdot \phi_{\tau\sigma}(x_0) \,=\,s^{2}_{\tau}(x_0)
\cdot \phi_{\tau\sigma}(x_0)\, .$$
Consequently, $\displaystyle{P^2(\tau,\sigma) \,=\,P^1(\tau,\sigma)}$.
Therefore the admissible collection for
$\mathcal {E}_1$ (with respect to $\{s^1_{\sigma}\}$) coincides with admissible
collection for $\mathcal{E}_2$ (with respect to $\{s^2_{\sigma}\}$).
However, for a different choice of collection of
distinguished sections we get an equivalent admissible collection.

Thus we may associate an equivalence class of
admissible collections to an isomorphism class of equivariant
principal bundles. We will denote this assignment by $\mb{A}$.

Conversely given an admissible collection $\{\rho, P\} = \{\rho_{\sigma},
P(\tau, \sigma) \}$, we construct a principal $G$-bundle $E(\{\rho, P\})$ as follows.
Let $\phi_{\tau \sigma}\,:\, X_{\sigma}\cap X_{\tau} \,\to\, G$ denote
the holomorphic (equivalently regular algebraic) extension of ${\rho}_{\tau} P(\tau,\sigma) {\rho}_{\sigma}^{-1}$.
Note that $\{\phi_{\tau \sigma} \}$
 satisfies the cocycle condition. Therefore we may  construct
 an algebraic (hence, holomorphic) principal $G$-bundle $E(\{\rho, P\}) $ over $X$ with  $\{\phi_{\tau \sigma} \}$
 as transition functions\,:
 $$ E(\{\rho, P\}) = (\bigsqcup_{\sigma} X_{\sigma} \times G )/ \sim$$
where $(x,g) \sim (y, h)$ for $(x,g) \in X_{\sigma} \times G $ and
$(y,h) \in X_{\tau} \times G $ if and only if
\begin{equation}\label{eq:equiv}
x\,=\, y, \ \ x \,\in\, X_{\sigma}\cap X_{\tau} \ \ {\rm and}\ \  h \,=\, \phi_{\tau
\sigma}(x) g\, .
\end{equation}

 The $G$-action on each $X_{\sigma} \times G$ is defined by right multiplication.
It is easy to check that these actions on different charts are compatible.

 Define $T$ action on each $X_{\sigma} \times G$ by
 $t(x,g)= (tx, {\rho}_{\sigma}(t) g)$. Note that if $(y,h) \in X_{\tau} \times
 G$ is equivalent to $(x,g) \in X_{\sigma} \times G $, then
 \begin{equation}\label{eq:equiv2}
t(y,h) = t(x, \phi_{\tau \sigma }(x) g) =
 (tx, {\rho}_{\tau}(t) \phi_{\tau \sigma }(x) g ).\end{equation}
 Now if $x $ belongs to the open orbit $O =T \subset X_{\sigma} \cap
 X_{\tau}$, then
$$
 \phi_{\tau \sigma}(tx) {\rho}_{\sigma}(t)  =
{\rho}_{\tau}(tx) P(\tau, \sigma) {\rho}_{\sigma}(tx)^{-1}
 {\rho}_{\sigma}(t)
 = {\rho}_{\tau}(t) {\rho}_{\tau} (x) P(\tau,\sigma) {\rho}_{\sigma} (x)^{-1}  =
 {\rho}_{\tau}(t) \phi_{\tau \sigma}(x).
$$

 Since both $\phi_{\tau \sigma}(tx){\rho}_{\sigma}(t) $ and $ {\rho}_{\tau}(t) \phi_{\tau
 \sigma}(x)$ are continuous in $x$ on $X_{\sigma} \cap X_{\tau}$
 and $O$ is dense in $X_{\sigma} \cap X_{\tau} $,
\begin{equation}\label{eq:equiv4}
\phi_{\tau \sigma}(tx) {\rho}_{\sigma}(t) \,=\, {\rho}_{\tau}(t)
\phi_{\tau \sigma}(x)\ \ \forall \  x \,\in\, X_{\sigma} \cap
X_{\tau}\, .
\end{equation}
{}From \eqref{eq:equiv2} and \eqref{eq:equiv4}, we have
$$t(y,h) = (tx, \phi_{\tau \sigma}(tx) {\rho}_{\sigma}(t) g   )$$
 whenever $(x,g)\,\sim\, (y,h)$. Since $t(x,g)\,=\, (tx, {\rho}_{\sigma}(t) g)$, using
\eqref{eq:equiv} it follows that $t(y,h)\,\sim\, t(x,g)$ whenever $(x,g) \,\sim\, (y,h)$.
In other words, the $T$-actions on the $X_{\sigma} \times G$
are compatible and define an (algebraic) action of $T$ on $E(\{\rho, P\})$.

It is easy to check that $\mb{A} ([E(\{\rho, P\}) ])\, = \,[\{\rho, P\}] $. This shows
that $\mb{A}$ is surjective.

Now we will show that $\mb{A}$ is injective.
Given a principal $G$-bundle $\mc{E}$ on $X$, consider any representative $\{\rho, P\}$
of $\mb{A}([\mc{E}])$. We will construct an
isomorphism $\Phi: \mc{E} \to E(\{\rho, P\})$. For this first
note that $\{\rho, P\}\, = \,\{ \rho_{\sigma}, P(\tau, \sigma)\}$ is associated to
a collection of distinguished sections $\{s_{\sigma} \}$. Given $e \in \mc{E}$, suppose $e \in \mc{E}_{\sigma}$. Then there
exists a unique $g^{\sigma}_e \in G$ such that
$e\,\,= \,\,s_{\sigma}(\pi(e)) \cdot g^{\sigma}_e$. Define
 $$\Phi(e)\, =\, [(\pi(e), g^{\sigma}_e)],$$ where the right hand side is the equivalence class of
 $ (\pi(e), g^{\sigma}_e)$ under the relation $\sim $ defined in \eqref{eq:equiv}.

Using \eqref{phits2} it is easy to check that $\Phi$ is well-defined. Since
$e\cdot h \,= \,s_{\sigma}(\pi(e)) \cdot g^{\sigma}_e h$, we have
$$\Phi(e\cdot h )\,=\, [\pi(e), g^{\sigma}_e h]\, = \,\Phi(e) \cdot h.$$
Therefore, $\Phi$ is a morphism of principal $G$-bundles. Since every morphism of
principal $G$-bundles over a fixed base is an isomorphism, it follows that $\Phi$ is an isomorphism.
Note that $$te \,=\,t s_{\sigma}(\pi(e)) \cdot g^{\sigma}_e\,=\,
s_{\sigma}(t\pi(e)) \cdot\rho_{\sigma}(t) g^{\sigma}_e\, .$$
Therefore, $$ \Phi(te)\,=\, [(t\pi(e), \rho_{\sigma}(t) g^{\sigma}_e )]
\,=\, t [\phi(e), g^{\sigma}_e] \,=\, t \Phi(e)\ .$$
Hence $\Phi$ is $T$-equivariant. The inverse of $\Phi$ is then automatically $T$-equivariant.

Now suppose $\mathcal E_1$, $\mathcal E_2$ are two equivariant principal $G$-bundle
such that $\mb{A}[\mathcal E_1] \,=\, \mb{A}[\mathcal E_2]\,=\,[\{\rho,P\}]$. Then it
follows from the above that $$\mathcal E_1 \,\cong \, E(\{\rho,P\}) \,\cong\,
\mathcal E_2\, .$$
Therefore $\mb{A}$ is injective. This concludes the proof.
\end{proof}

\begin{remark}\label{algrep} It follows from the proof of Theorem \ref{class}
 that every isomorphism class of 
$T$-equivariant holomorphic principal $G$-bundles over $X$ has an algebraic representative.
\end{remark}

The following generalizes a similar result for abelian structure
groups in \cite{DP}.

\begin{corollary}\label{nil} If $G$ is a nilpotent group, then a
$T$-equivariant holomorphic (or algebraic) principal $G$-bundle over a nonsingular toric
variety admits an equivariant reduction of structure group to a
maximal torus of $G$. In particular, if $G$ is unipotent then the
bundle is trivial with trivial $T$-action.
\end{corollary}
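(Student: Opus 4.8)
The plan is to exploit Theorem~\ref{class}: a $T$-equivariant principal $G$-bundle $\mc{E}$ over $X$ is determined by an admissible collection $\{\rho_{\sigma}, P(\tau,\sigma)\}$, and an equivariant reduction of structure group to a subgroup $H \subseteq G$ corresponds (after replacing the collection by an equivalent one) to a collection all of whose data --- the homomorphisms $\rho_{\sigma}$ and the cocycle elements $P(\tau,\sigma)$ --- take values in $H$. So the goal is to show that when $G$ is nilpotent, after conjugating the $\rho_{\sigma}$'s and the $P(\tau,\sigma)$'s by suitable elements $g_{\sigma} \in G$, everything lands inside a single maximal torus $T_G$ of $G$.

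First I would recall the structure theory of connected nilpotent linear algebraic groups: such a $G$ has a unique maximal torus $T_G$, it is central, $G = T_G \times U$ where $U$ is the unipotent radical, and moreover every algebraic homomorphism $\rho_{\sigma}: T \to G$ has image in $T_G$ (since $T$ is a torus, its image is a diagonalizable subgroup of $G$, and the only such subgroups lie in $T_G$; equivalently the image is both a torus and --- being a quotient of $T$ --- has no nontrivial unipotent part once we note $G/T_G = U$ is unipotent, so the composite $T \to G \to U$ is trivial). Hence condition (1) of admissibility already forces each $\rho_{\sigma}$ to take values in $T_G$ with \emph{no conjugation needed}; and since $T_G$ is central, the $\rho_{\sigma}$ are unaffected by the conjugations $g_{\sigma}^{-1}(\cdot)g_{\sigma}$. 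This reduces the problem to arranging $P(\tau,\sigma) \in T_G$ for all $\tau,\sigma$, i.e. to showing the class of the $1$-cocycle $\{P(\tau,\sigma)\}$ in the (non-abelian, \v{C}ech-type) cohomology with coefficients in the constant sheaf $G$ over the nerve of the cover $\{X_{\sigma}\}$ can be represented by a cocycle valued in $T_G$. Equivalently: the underlying (non-equivariant, but locally constant on the nerve) $G$-bundle data admits a reduction to $T_G$.

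For that step I would use the semidirect-product (here direct-product) structure $G = T_G \times U$ and the contractibility/convexity input. Write $P(\tau,\sigma) = (\bar P(\tau,\sigma), Q(\tau,\sigma))$ with $\bar P(\tau,\sigma) \in T_G$ and $Q(\tau,\sigma) \in U$; because $T_G$ is central, the cocycle condition (4) splits into a cocycle condition for $\{\bar P(\tau,\sigma)\}$ in $T_G$ and one for $\{Q(\tau,\sigma)\}$ in $U$. It then suffices to kill the $U$-part by a change $P(\tau,\sigma) \mapsto g_{\tau}^{-1} P(\tau,\sigma) g_{\sigma}$ with $g_{\sigma} \in U$ (such a change does not disturb the $T_G$-part since $U$ is normal and $T_G$ central, and it leaves the $\rho_{\sigma}$'s fixed). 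The key observation is that the $Q(\tau,\sigma)$ are \emph{constants}, so the cocycle $\{Q(\tau,\sigma)\}$ is a $1$-cocycle of the \emph{combinatorial} nerve of $\Xi^{*}$ with values in the unipotent group $U$; I would trivialize it by induction on the lower central series of $U$, at each stage reducing to an abelian (vector-group) cocycle over the nerve --- which is a simplicial complex whose relevant cohomology vanishes here because we only ever need to solve over the poset of maximal cones and their pairwise and triple intersections, and a coboundary can be built cone-by-cone by the standard spanning-tree / partition-of-unity-free combinatorial argument (pick a base cone, set $g_{\sigma}$ along a path, check consistency via the $2$-cocycle condition). Alternatively, and more in the spirit of the paper, one can appeal directly to Lemma~\ref{lem:cone1} and the splitting of $E(\{\rho,P\})$ restricted to each $X_{\sigma}$ together with the triviality of $H^1$ of a point, reassembling globally. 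Either way, once $\{Q(\tau,\sigma)\}$ is trivialized we are left with an admissible collection valued entirely in $T_G$, which is precisely an equivariant reduction to $T_G$.

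The main obstacle I expect is the non-abelian cohomology bookkeeping in the last step: verifying that the change of sections needed to move $\{Q(\tau,\sigma)\}$ into the trivial class can be chosen \emph{inside} $U$ and \emph{compatibly} across all pairs of maximal cones, i.e. that no obstruction survives in $H^2$ of the nerve. This is where nilpotence (as opposed to mere solvability) is used essentially: the lower central series lets one linearize the problem one graded piece at a time, turning a potentially obstructed non-abelian lifting problem into a sequence of abelian ones. For the final sentence of the corollary, if $G$ is unipotent then $T_G$ is trivial, so the reduction is to the trivial group: the collection becomes $\{\rho_{\sigma} \equiv 1,\ P(\tau,\sigma) = 1_G\}$, whence by the construction $E(\{\rho,P\})$ in the proof of Theorem~\ref{class} the bundle is $X \times G$ with $T$ acting only on the first factor --- that is, trivial with trivial $T$-action.
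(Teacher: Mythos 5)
Your proposal reaches the correct conclusion and shares the paper's starting point (each $\rho_{\sigma}$ automatically lands in the unique maximal torus $K=T_G$ because $\mathrm{Hom}(T,U)$ is trivial for $U$ unipotent), but the second half is far more elaborate than it needs to be, and its one soft spot is exactly where the simplification lives. You split $P(\tau,\sigma)=(\bar P,Q)$ and propose to kill only the $U$-part $Q$ by induction on the lower central series, worrying about obstructions in $H^{2}$ of the nerve. In fact no induction and no nilpotence of $U$ is needed for this step: the cocycle $\{P(\tau,\sigma)\}$ is \emph{constant} and the open sets $X_{\sigma}$ all contain the dense orbit $O$, so every pairwise intersection is nonempty and the nerve is a full simplex. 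Concretely, $P(\tau,\sigma)=\phi_{\tau\sigma}(x_0)$ for a single point $x_0\in O$, so fixing a base cone $\delta_0$ and setting $g_{\sigma}=P(\sigma,\delta_0)$ gives $P(\tau,\sigma)=g_{\tau}g_{\sigma}^{-1}$ directly from the cocycle condition; geometrically, one replaces the distinguished sections $s_{\sigma}$ by $s_{\sigma}\cdot g_{\sigma}$ so that they all pass through a common point of $\mc{E}(x_0)$. This kills the \emph{entire} $G$-valued cocycle $\{P(\tau,\sigma)\}$ at once (this works for any $G$, as the paper also uses in Lemma~\ref{normal}), after which the transition functions are $\phi_{\tau\sigma}=\rho_{\tau}\rho_{\sigma}^{-1}$, valued in $K$ since each $\rho_{\sigma}$ is. That is the paper's two-line argument. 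Your route is not wrong, but the appeal to a ``spanning-tree / cone-by-cone'' trivialization needs the observation that all pairwise intersections of the $X_{\sigma}$ are nonempty to be valid --- and once you make that observation, the lower-central-series machinery becomes superfluous. Your treatment of the unipotent case is fine and matches the paper's.
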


\begin{proof} If $G$ is nilpotent, then it is a direct product $K \times U$, where $K$ is a maximal
torus in $G$ and $U$ is unipotent. Homomorphisms from $T$ to $U$ are trivial. So the image of any
homomorphism $\rho:T \to G$ lies in K.

Recall that if $s_{\sigma}$ is a distinguished section, so is $s_{\sigma} \cdot g$
for any $g \in G$. Using this we can choose distinguished sections so that $P(\tau, \sigma) = 1_G$ for
all pairs $(\tau,\sigma)$ of maximal cones. Then the transitions maps $\phi_{\tau\sigma} = \rho_{\tau} \rho_{\sigma}^{-1}$
take values in $K$. This implies the principal bundle has a reduction to  $K$.

The claim about unipotent $G$ follows easily from the fact that
homomorphisms from $T$ to $G$ are trivial.
\end{proof}

\begin{example} Let $(n_1, \ldots, n_k)$ be a partition of $r$. Let $G$ be the nilpotent subgroup of ${\rm GL}(r, \CC)$
consisting of all block diagonal matrices with $k$ blocks of sizes $n_1, \ldots, n_k$ respectively, where each block is upper
triangular and the diagonal elements within a block are the same. Assume $X$ is a complete, nonsingular toric variety with fan $\Xi$.
 Then the isomorphism classes of $T$-equivariant holomorphic (equivalently algebraic)
 principal $G$-bundles on $X$ are parametrized by $\mathbb{Z}^{dk}$, where
$d$ is the cardinality of $\Xi(1)$. This follows form the fact that equivariant line bundles over $X$ are parametrized by $\mathbb{Z}^d$ (see
 \cite{Oda}).
\end{example}

\section{Kaneyama type description}\label{ktd}

In this section our main goal is to give a more combinatorial flavor to our
classification. We fix an embedding of $G$ in ${\rm GL}(r, \CC)$ such that $K_0 \,
:=\, (\CC^*)^r \bigcap G$ is a maximal torus of $G$. Inspired by the work of Kaneyama,
we restrict to a smaller class of distinguished sections.

\begin{defn}
A distinguished section $s$ of $\mc{E}_{\sigma}$ is called a Kaneyama section if the
 image of the corresponding homomorphism $\rho_{s}$ lies in $K_0$. \end{defn}

\begin{lemma}\label{ks} Every  $\mc{E}_{\sigma}$ has a Kaneyama section.
\end{lemma}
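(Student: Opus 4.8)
The plan is to start from an arbitrary distinguished section $s$ of $\mc{E}_\sigma$ and modify it by a constant element of $G$, using Lemma \ref{ds2}, so that the associated homomorphism $\rho_s\colon T\to G$ has image inside the chosen maximal torus $K_0$. First I would recall from Lemma \ref{lem:cone1} that $\mc{E}_\sigma$ does admit a distinguished section $s$, and that by the definition of ``distinguished'' the homomorphism $\rho_s$ factors through $\pi_\sigma\colon T\to T_\sigma$; in particular $\rho_s(T)$ is the image of a homomorphism from the torus $T_\sigma$, hence $\rho_s(T)$ is a (possibly trivial) subtorus, or more precisely lies in a connected abelian subgroup of $G$ consisting of semisimple elements — the image of an algebraic torus under an algebraic homomorphism. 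The key classical fact I would invoke is that any such image, being a torus in $G$, is contained in some maximal torus of $G$, and that all maximal tori of $G$ are conjugate. Therefore there exists $g\in G$ with $g^{-1}\rho_s(T)g\subseteq K_0$.

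Having found such a $g$, I would set $s' = s\cdot g$. By Lemma \ref{ds2}, $s'$ is again a distinguished section of $\mc{E}_\sigma$, and by the computation recorded in that lemma (equivalently by \eqref{eq:rho2}), $\rho_{s'} = g^{-1}\rho_s g$, whose image now lies in $K_0$ by construction. Thus $s'$ is a Kaneyama section, and the proof is complete.

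The main point requiring care — the only real obstacle — is the assertion that the image $\rho_s(T)$ lies in a maximal torus of $G$ and hence can be conjugated into $K_0$. For $G$ not necessarily reductive one should note that $\rho_s(T)$ is still a linearly reductive (in fact diagonalizable) subgroup of $G$, being a quotient of a torus; by the structure theory of linear algebraic groups such a subgroup lies in a maximal torus of $G$, and any two maximal tori of $G$ are conjugate (this holds for arbitrary connected linear algebraic groups, and since $\rho_s(T)$ is connected it lies in the identity component $G^\circ$, so connectedness of $G$ is not actually needed). One could alternatively argue more concretely via the fixed embedding $G\hookrightarrow \mathrm{GL}(r,\CC)$: the commuting family of semisimple operators $\rho_s(T)$ is simultaneously diagonalizable, so it lies in a maximal torus of $\mathrm{GL}(r,\CC)$ conjugate to $(\CC^*)^r$; intersecting with $G$ and using that $K_0 = (\CC^*)^r\cap G$ is a maximal torus of $G$ then lets one conjugate $\rho_s(T)$ into $K_0$ inside $G$. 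Either way, once the conjugating element $g\in G$ is in hand, the rest is the immediate application of Lemma \ref{ds2}.
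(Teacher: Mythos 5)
Your proof is correct and follows essentially the same route as the paper's: take a distinguished section (Lemma \ref{lem:cone1}), observe that $\rho_s(T)$, being the image of a torus, lies in some maximal torus of $G$, conjugate that torus into $K_0$, and translate the section by the conjugating element using Lemma \ref{ds2} and \eqref{eq:rho2}. The extra justification you give for why the image lies in a maximal torus (and the alternative argument via the embedding $G\hookrightarrow \mathrm{GL}(r,\CC)$) is a welcome amplification of a step the paper simply asserts, but it is not a different method.
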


\begin{proof}
Let $s$ be a distinguished section of $\mc{E}_{\sigma}$.
  The image of the corresponding homomorphism $\rho_{s} : T \to G$ is contained in some maximal torus $K_{\sigma}$ of $G$.
    Since all maximal tori in $G$ are
    conjugate, there exists  $h_{\sigma} \in G$ be such that $h_{\sigma} K_{\sigma} h_{\sigma}^{-1} \, =\, K_0$.
    Then $h_{\sigma} \rho_{s}(t) h_{\sigma}^{-1} \in K_0$. Hence $s\cdot h_{\sigma}$ is a Kaneyama section.
    \end{proof}

For a Kaneyama section $s_{\sigma}$, the image $\rho_{\sigma}(t) $ is a diagonal matrix
which we denote by
$$
\xi^{\sigma}(t) \, = \, \text{diag} (\xi^{ \sigma}_1(t)\,,\, \ldots\, ,
\,\xi^{\sigma}_r(t))\, .
$$
Note that for different choices of Kaneyama sections $s_{\sigma}$, the images
$\rho_{\sigma}(t)$ can vary up to conjugation (see Lemma \ref{welld}),
but they are all diagonal matrices.

\begin{lemma}\label{normal} If the maximal torus $K_0$ is normal in $G$, then
any  $T$-equivariant holomorphic (or algebraic) principal $G$-bundle over a nonsingular toric
variety admits an equivariant reduction of structure group to
$K_0$.
\end{lemma}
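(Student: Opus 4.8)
The plan is to read $\mc{E}$ off the combinatorial data of Section \ref{ktd} and to exploit that normality of $K_0$ lets us conjugate the Kaneyama data by arbitrary elements of $G$ without leaving the class of Kaneyama sections. Concretely, by Lemma \ref{ks} I would first pick, for every maximal cone $\sigma \in \Xi^*$, a Kaneyama section $s_\sigma$ of $\mc{E}_\sigma$, with associated homomorphism $\rho_\sigma : T \to G$ whose image lies in $K_0$; together these produce transition functions $\phi_{\tau\sigma} : X_\sigma \cap X_\tau \to G$ with $\phi_{\tau\sigma}(tx) = \rho_\tau(t)\,\phi_{\tau\sigma}(x)\,\rho_\sigma(t)^{-1}$ and $\phi_{\tau\sigma}(x_0) = P(\tau,\sigma)$, where $x_0 \in O$ is the fixed base point. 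The goal is to adjust the $s_\sigma$ so that all $\phi_{\tau\sigma}$ become $K_0$-valued; the reduction is then the subbundle generated over $K_0$ by the adjusted sections.

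Next I would fix a base cone $\sigma_0$ and replace $s_\sigma$ by $s'_\sigma := s_\sigma \cdot P(\sigma,\sigma_0)$, so that $s'_{\sigma_0} = s_{\sigma_0}$ by condition (3). Since $K_0 \trianglelefteq G$, conjugation by $P(\sigma,\sigma_0)$ preserves $K_0$, hence the new homomorphism $\rho'_\sigma = P(\sigma,\sigma_0)^{-1}\rho_\sigma P(\sigma,\sigma_0)$ still has image in $K_0$ and $s'_\sigma$ is again a Kaneyama section (Lemma \ref{ds2}). By the transformation rules recorded just before Theorem \ref{class}, the new transition functions are $\phi'_{\tau\sigma} = P(\tau,\sigma_0)^{-1}\,\phi_{\tau\sigma}\,P(\sigma,\sigma_0)$, and evaluating at $x_0$ gives $\phi'_{\tau\sigma}(x_0) = P(\tau,\sigma_0)^{-1}P(\tau,\sigma)P(\sigma,\sigma_0) = 1_G$, a short computation from the cocycle conditions (3) and (4) (applied to the triples $(\tau,\sigma,\sigma_0)$ and $(\tau,\sigma_0,\tau)$, all of which have nonempty intersection since $O$ lies in every $X_\sigma$).

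The heart of the argument is to upgrade this to: $\phi'_{\tau\sigma}$ is $K_0$-valued on all of $X_\sigma \cap X_\tau$. Let $q : G \to G/K_0$ be the quotient homomorphism of linear algebraic (resp. complex Lie) groups, which exists because $K_0$ is closed and normal. Since $q \circ \rho'_\sigma$ and $q \circ \rho'_\tau$ are trivial, the equivariance relation for $\phi'_{\tau\sigma}$ shows that $q \circ \phi'_{\tau\sigma}$ is a $T$-invariant morphism on $X_\sigma \cap X_\tau = X_{\sigma \cap \tau}$; as the latter is irreducible (indeed nonsingular and connected) with dense $T$-orbit $O$, a $T$-invariant morphism on it is constant, and the constant value is $q(\phi'_{\tau\sigma}(x_0)) = q(1_G) = e$. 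Hence $\phi'_{\tau\sigma}(X_\sigma \cap X_\tau) \subseteq q^{-1}(e) = K_0$. Finally, the subset $\mc{F} := \bigcup_\sigma s'_\sigma(X_\sigma)\cdot K_0 \subseteq \mc{E}$ is a well-defined principal $K_0$-subbundle — the local pieces agree on overlaps because $s'_\sigma(x) = s'_\tau(x)\cdot\phi'_{\tau\sigma}(x)$ with $\phi'_{\tau\sigma}(x)\in K_0$ — and it is $T$-invariant since $t\cdot(s'_\sigma(x)\cdot k) = s'_\sigma(tx)\cdot \rho'_\sigma(t)k$ with $\rho'_\sigma(t)k \in K_0$. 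This $\mc{F}$ is the desired equivariant reduction of structure group to $K_0$.

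The step I expect to be the main obstacle is the third one: one cannot conclude directly from $\phi'_{\tau\sigma}(x_0) = 1_G$ and the equivariance relation that $\phi'_{\tau\sigma}$ is $K_0$-valued, because $K_0 = (\CC^*)^r \cap G$ need not be closed in $G$, so a naive continuity/density argument would only place the values in the closure of $K_0$. Passing to the quotient $G/K_0$ and using irreducibility of $X_\sigma \cap X_\tau$ to force the $G/K_0$-valued function to equal the constant $e$ is what circumvents this cleanly; normality of $K_0$ is used twice, once to keep $s'_\sigma$ Kaneyama and once to guarantee that $G/K_0$ is a group. Everything else is bookkeeping with the transformation formulas and the cocycle identities.
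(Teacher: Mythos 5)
Your proof is correct and follows essentially the same route as the paper: choose Kaneyama sections, renormalize them (your $g_\sigma = P(\sigma,\sigma_0)$ is an explicit instance of the paper's choice making all $s_\sigma\cdot g_\sigma$ pass through a common point of $\mc{E}(x_0)$, so that all new cocycle values equal $1_G$), and then use normality of $K_0$ to see that the transition functions $\rho'_\tau(t)\rho'_\sigma(t)^{-1}$ land in $K_0$. Your only over-caution is the worry that $K_0$ might fail to be closed in $G$: it is closed, being a maximal torus (equivalently, the intersection of $G$ with the closed diagonal subgroup of ${\rm GL}(r,\CC)$), so the plain density-plus-continuity argument already extends $K_0$-valuedness from $O$ to all of $X_\sigma\cap X_\tau$, and your detour through $G/K_0$, while valid, is not needed.
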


\begin{proof} Let $\{ s_{\sigma} \}$ be a collection of
Kaneyama sections as $\sigma$ varies over all maximal cones. Let
$\xi^{\tau}(t) P(\tau, \sigma)\xi^{\sigma}(t)^{-1} $ be the
corresponding transition maps. For each $\sigma$, we choose $g_{\sigma} \in G$ such that all $ s_{\sigma}\cdot g_{\sigma}$'s pass through a common point in $\mc{E}(x_0)$. In other words, for these
new sections, all $P(\tau, \sigma) \,= \, 1_G$. The new transition maps
are $ g_{\tau}^{-1} \xi^{\tau}(t) g_{\tau}  g_{\sigma}^{-1}
\xi^{\sigma}(t)^{-1} g_{\sigma}$. These are in $K_0$ as it is normal
in $G$.
\end{proof}




An analogue of the following lemma was proved in \cite{Kan1} in the context of vector bundles.
The idea of our proof is the same as in \cite{Kan1}, but we include it for clarity.
 The result is expected as the $T_{\sigma \bigcap \tau}$-actions on the fibers of $\mc{E}_{\sigma}$ and
  $\mc{E}_{\tau}$ at a fixed point $x_{\sigma \bigcap \tau} \in X_{\sigma } \bigcap X_{\tau}$ of $T_{\sigma \bigcap \tau}$ agree.

\begin{lemma}\label{kl} Suppose $\sigma$ and $\tau$ are  cones of $\; \Xi$. Then there is a permutation
$\gamma$ of $\{ 1, \ldots,r\}$ such that $\eta(\xi^{\sigma}_{i}) = \eta (\xi^{\tau}_{\gamma(i)})$ for every $\eta \in \sigma \bigcap \tau \bigcap \Xi(1)$.
\end{lemma}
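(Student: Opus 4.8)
The plan is to realize both $\xi^{\sigma}$ and $\xi^{\tau}$ as the weights of the $T_{\sigma\cap\tau}$-action on the common fiber over a point fixed by $T_{\sigma\cap\tau}$, and then argue that these weight multisets must coincide. First I would choose a point $x_0$ in the principal orbit $O$ and let $x_{\delta}$ be a $T_{\delta}$-fixed point in $X_{\delta}$ (here $\delta := \sigma\cap\tau$), obtained as a limit $\lim_{z\to 0}\lambda(z)x_0$ for a suitable one-parameter subgroup $\lambda$ of $T_{\delta}$. Using Lemma~\ref{phits2} there is $P = P(\tau,\sigma)\in G$ with $s_{\sigma}(x_0) = s_{\tau}(x_0)\cdot P$, and since $\rho_{\sigma}$ factors through $\pi_{\sigma}$ and $\rho_{\tau}$ through $\pi_{\tau}$, both restrict on $T_{\delta}$ to honest homomorphisms. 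The key point, which I would extract from the extension argument already used in the proof of Lemma~\ref{aut}, is that the transition function $\phi_{\tau\sigma} = \rho_{\tau}P\rho_{\sigma}^{-1}$ extends holomorphically (hence continuously) over $x_{\delta}$, and evaluating the relation $\phi_{\tau\sigma}(tx) = \rho_{\tau}(t)\phi_{\tau\sigma}(x)\rho_{\sigma}(t)^{-1}$ for $t\in T_{\delta}$ and passing to the limit $x\to x_{\delta}$ gives
$$
\phi_{\tau\sigma}(x_{\delta}) \,=\, \rho_{\tau}(t)\,\phi_{\tau\sigma}(x_{\delta})\,\rho_{\sigma}(t)^{-1} \qquad \text{for all } t\in T_{\delta}.
$$
Writing $Q := \phi_{\tau\sigma}(x_{\delta})\in G$, this says $Q\,\rho_{\sigma}(t)\,Q^{-1} = \rho_{\tau}(t)$ for all $t\in T_{\delta}$; i.e. the two representations $\rho_{\sigma}|_{T_{\delta}}$ and $\rho_{\tau}|_{T_{\delta}}$ of $T_{\delta}$ into $\mathrm{GL}(r,\CC)$ are conjugate.

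Next, since $\xi^{\sigma}(t) = \mathrm{diag}(\xi^{\sigma}_1(t),\ldots,\xi^{\sigma}_r(t))$ and $\xi^{\tau}(t)$ are both diagonal and conjugate in $\mathrm{GL}(r,\CC)$ for every $t\in T_{\delta}$, the unordered multiset of characters $\{\xi^{\sigma}_1|_{T_{\delta}},\ldots,\xi^{\sigma}_r|_{T_{\delta}}\}$ equals the multiset $\{\xi^{\tau}_1|_{T_{\delta}},\ldots,\xi^{\tau}_r|_{T_{\delta}}\}$ — two diagonalizable commuting families with the same simultaneous spectrum have the same weight multiset. Hence there is a permutation $\gamma$ of $\{1,\ldots,r\}$ with $\xi^{\sigma}_i|_{T_{\delta}} = \xi^{\tau}_{\gamma(i)}|_{T_{\delta}}$. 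Finally, for $\eta\in\sigma\cap\tau\cap\Xi(1)$, the corresponding primitive ray generator lies in the sublattice $N_{\delta} = N\cap\,(\sigma\cap\tau)$ spanning the one-parameter subgroups of $T_{\delta}$; thus $\eta$ defines a one-parameter subgroup of $T_{\delta}$, and evaluating the character equality $\xi^{\sigma}_i|_{T_{\delta}} = \xi^{\tau}_{\gamma(i)}|_{T_{\delta}}$ on it yields $\eta(\xi^{\sigma}_i) = \eta(\xi^{\tau}_{\gamma(i)})$, as required. (Here I am using the pairing $\eta(\chi)\in\ZZ$ between a ray $\eta\in N$ and a character $\chi$ of $T$, i.e. the integer $m$ with $\chi(\eta(z)) = z^m$.)

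The main obstacle I anticipate is the first step: making precise the claim that $\phi_{\tau\sigma}$ extends over the $T_{\delta}$-fixed point $x_{\delta}$ and that the limiting identity $Q\rho_{\sigma}|_{T_{\delta}}Q^{-1} = \rho_{\tau}|_{T_{\delta}}$ really holds. One must choose $x_{\delta}$ to lie in $X_{\sigma}\cap X_{\tau}$ and be fixed by $T_{\delta}$ (not merely by a larger stabilizer), which is possible precisely because $\delta = \sigma\cap\tau$ is a face of both $\sigma$ and $\tau$ so that $x_{\delta}$ sits in both affine charts; this is where the hypothesis "$\eta\in\sigma\cap\tau$" is genuinely used. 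Once the conjugacy of the two restricted representations is in hand, the permutation $\gamma$ and the evaluation on rays are routine. Note that $\gamma$ may depend on $\delta = \sigma\cap\tau$, but the statement only asserts existence of such a $\gamma$, so this is harmless.
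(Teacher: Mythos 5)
Your proof is correct, but it takes a genuinely different route from the paper. You prove the statement representation-theoretically: the equivariance identity $\phi_{\tau\sigma}(tx)=\rho_{\tau}(t)\phi_{\tau\sigma}(x)\rho_{\sigma}(t)^{-1}$ evaluated at a $T_{\delta}$-fixed point $x_{\delta}\in O_{\delta}$ (with $\delta=\sigma\cap\tau$, so that $x_{\delta}$ lies in $X_{\sigma}\cap X_{\tau}=X_{\delta}$ and $\phi_{\tau\sigma}$ is defined there) shows that $Q=\phi_{\tau\sigma}(x_{\delta})$ intertwines $\rho_{\sigma}|_{T_{\delta}}$ and $\rho_{\tau}|_{T_{\delta}}$, and conjugate torus representations have equal weight multisets; restricting characters of $T_{\delta}$ to the rays of $\delta$ then gives the permutation. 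This is exactly the heuristic the paper states just before the lemma (``the $T_{\sigma\cap\tau}$-actions on the fibers \dots\ at a fixed point \dots\ agree''), made rigorous. The paper instead argues combinatorially, following Kaneyama: regularity of each matrix entry $p_{ij}\,\xi^{\tau}_i(t)\,\xi^{\sigma}_j(t)^{-1}$ of the transition function on the affine variety $X_{\sigma\cap\tau}$ forces $\eta(\xi^{\tau}_i)\ge\eta(\xi^{\sigma}_j)$ whenever $p_{ij}\neq 0$; the nonvanishing of $\det P(\tau,\sigma)$ produces a permutation $\gamma$ with $p_{i\gamma(i)}\neq 0$, and the fact that $\det\bigl[\xi^{\tau}(t)P(\tau,\sigma)\xi^{\sigma}(t)^{-1}\bigr]$ is a unit turns all the inequalities along $\gamma$ into equalities. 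Your argument is cleaner and more conceptual for the lemma as stated, but the paper's proof does extra work that is needed later: the intermediate inequality $\eta(\xi^{\tau}_i)\ge\eta(\xi^{\sigma}_j)$ for $p_{ij}\neq 0$ is precisely the support condition imposed on $P(\tau,\sigma)$ in item (2) of the Kaneyama-type data for Theorem \ref{class2}, and your approach does not yield it. Two small cosmetic points: what you cite as ``Lemma \ref{phits2}'' is an equation, not a lemma, and the phrase about ``diagonalizable commuting families with the same simultaneous spectrum'' is better stated as: two diagonal representations of a torus that are conjugate by a fixed element have the same character, hence the same multiset of weights.
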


 \begin{proof} Recall that the transition function $\xi^{\tau}(t)  P(\tau, \sigma)   \xi^{\sigma}(t)^{-1}$ is regular on
  $ X_{\sigma \bigcap \tau}$. Consider $P(\tau,\sigma)$ as a matrix and
  let $ p_{ij} = P(\tau, \sigma)_{ij}$. Then
  $ p_{ij} \xi^{\tau}_i(t) \xi^{\sigma}_j(t)^{-1} $ is regular on $X_{\sigma \bigcap \tau}$ for each $(i,j)$.

Each $\xi^{\sigma}_i$ may be considered as
  an element of the dual lattice $M$ of $N$.
   Then regularity implies that
  \begin{equation}\label{first} \eta(\xi^{\tau}_i) \ge \eta( \xi^{\sigma}_j ) \end{equation} for every $\eta \in \sigma \bigcap \tau \bigcap \Xi(1)$ whenever $p_{ij} \neq 0$.

    Since
  $$ \det P(\tau,\sigma) = \sum_{\gamma \in S_r} \text{sign}(\gamma) p_{1 \gamma(1)} \ldots p_{r \gamma(r)} \neq 0 \, ,  $$
there exists a permutation $\gamma $ such that $p_{i \gamma(i)} \neq 0$ for every $i$.  Therefore,
$$ \eta(\xi^{\tau}_i) \ge \eta( \xi^{\sigma}_{\gamma(i)} ) $$ for every $i$ and every $\eta \in \sigma \bigcap \tau \bigcap \Xi(1)$.

Moreover since
$$ \det \left[  \xi^{\tau}(t) P(\tau,\sigma ) \xi^{\sigma}(t)^{-1} \right]= \det P(\tau,\sigma) \prod_i \xi^{\tau}_i(t) \xi^{\sigma}_i(t)^{-1}  $$
 is a unit on
$X_{\sigma \bigcap \tau}$, we have
\begin{equation}\label{sec}
\eta(\xi^{\tau}_1 + \ldots \xi^{\tau}_r - \xi^{\sigma}_1 - \ldots \xi^{\sigma}_r)= 0
\end{equation}
for every $\eta \in \sigma \bigcap \tau \bigcap \Xi(1)$. Comparing \eqref{sec} with \eqref{first}, we have
$$ \eta(\xi^{\tau}_i) = \eta(\xi^{\sigma}_{\gamma(i)}) $$
for every $i$  and every $\eta$ in $\sigma \bigcap \tau \bigcap \Xi(1)$.   \end{proof}

Write
$$  \xi^{\sigma} := (\xi^{\sigma}_1, \ldots, \xi^{\sigma}_r) \in M^{\oplus r}.$$
Recall that $\rho_{\sigma}(t)$, and therefore $\xi^{\sigma}(t)$ must factor through the projection
$\pi_{\sigma}: T \to T_{\sigma}$. There is a map $ \pi_{\sigma}^{*}: M_{\sigma} \to M$, associated to $\pi_{\sigma}$, from the characters  of $T_{\sigma}$ to those of $T$. Then, $\xi^{\sigma}$ lies in the 
image of $M_{\sigma}^{\oplus r} $ in $M^{\oplus r}$ under the map induced by $\pi_{\sigma}^{*}$.
Note that this image is same as $M^{\oplus r}$ if $\sigma$ is of top dimension, as $T_{\sigma}=T$
in this case.

Denote the set of maximal cones of $\Xi$ by $\Xi^*$. Consider the following abstract data.
\begin{enumerate}
\item A map $$ \xi: \Xi^* \to ( \pi_{\sigma}^{*} (M_{\sigma}))^{\oplus r}  $$ sending $\sigma $ to $\xi^{\sigma} = (\xi^{\sigma}_1, \ldots, \xi^{\sigma}_r) $
such that for every pair of cones $\sigma, \tau \in \Xi(n)$, there exists a permutation $\gamma$ such that
$$\eta(\xi^{\sigma}_i) = \eta( \xi^{\tau}_{\gamma(i)})$$
for every $i$ and every  $\eta \in \sigma \bigcap \tau \bigcap \Xi(1)$.

\item A map $$P: \Xi^* \times \Xi^* \to G \le GL(r) $$ sending $(\tau,\sigma)$  to $P(\tau, \sigma)$ such that
$P( \tau, \sigma)_{ij} \neq 0$ only if $\eta(\xi^{\tau}_i) \ge \eta(\xi^{\sigma}_j)$ for every $\eta$ in $(\tau \bigcap \sigma) \bigcap \Xi(1)$,
 and such that $P(\sigma, \sigma) =1_G$ and
$$ P ( \tau, \sigma) P(\sigma, \delta)  P (\tau, \delta) = 1_G $$ for every $\tau, \sigma, \delta $ in $\Xi^{*}$.

\item Write $(\xi,P)$ for a collection  $\{\xi^{\sigma}, P(\tau, \sigma) \}$ where $\sigma, \tau$ vary over $\Xi^*$.
Two pairs $(\xi,P)$ and $(\xi',P')$ are said to be equivalent if there exists a permutation $\gamma \in S_r$, depending on $\sigma$, such that
$$ (\xi^{\sigma}_1, \ldots, \xi^{\sigma}_r ) =  (\xi'^{\sigma}_{\gamma(1)}, \ldots, \xi'^{\sigma}_{\gamma(r)} )$$ for every $\sigma \in \Xi^*$,
 and if there exists $$ g: \Xi \to G $$ such that $$ P'(\tau, \sigma)= g(\tau)^{-1} P (\tau, \sigma) g(\sigma) $$ for every
$\tau, \sigma$ in $\Xi^*$.
\end{enumerate}

\begin{theorem}\label{class2} Let $X$ be a  nonsingular toric variety defined by a fan $\Xi$ of dimension $n$. The set
 of isomorphism classes of $T$-equivariant principal holomorphic (or algebraic)
$G$-bundles on $X$ is in bijective correspondence with the set of data $(1)$ and $(2)$ up to the equivalence $(3)$.
\end{theorem}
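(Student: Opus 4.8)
The plan is to leverage Theorem \ref{class} and reduce the statement to a bookkeeping translation between admissible collections and the combinatorial data $(\xi, P)$. First I would observe that by Lemma \ref{ks}, every $\mc{E}_\sigma$ (for $\sigma$ a maximal cone) admits a Kaneyama section, so in the construction of the assignment $\mb{A}$ of Theorem \ref{class} we may always choose the distinguished sections $\{s_\sigma\}$ to be Kaneyama sections. With such a choice, the homomorphism $\rho_\sigma : T \to G$ has image in the fixed maximal torus $K_0$, hence is given by a diagonal matrix $\xi^\sigma(t) = \mathrm{diag}(\xi^\sigma_1(t), \ldots, \xi^\sigma_r(t))$ with each $\xi^\sigma_i \in M$; the condition that $\rho_\sigma$ factor through $\pi_\sigma : T \to T_\sigma$ translates exactly into $\xi^\sigma \in (\pi^*_\sigma(M_\sigma))^{\oplus r}$. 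This shows that the data attached to a bundle is of the claimed form, and that condition (1) holds by Lemma \ref{kl}.

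Next I would check that the remaining defining conditions of an admissible collection translate into conditions (1) and (2) of the abstract data. The cocycle condition $P(\tau,\sigma)P(\sigma,\delta)P(\delta,\tau) = 1_G$ and the normalization $P(\sigma,\sigma) = 1_G$ carry over verbatim. The key point is condition (2) of the definition of admissible collection — that $\rho_\tau P(\tau,\sigma) \rho_\sigma^{-1}$ extends to a regular $G$-valued function on $X_{\sigma}\cap X_\tau$. Writing this out entrywise as in the proof of Lemma \ref{kl}, the $(i,j)$ entry is $p_{ij}\, \xi^\tau_i(t)\, \xi^\sigma_j(t)^{-1}$, and this Laurent monomial (scaled by $p_{ij}$) is regular on $X_{\sigma\cap\tau}$ if and only if $p_{ij} = 0$ or $\eta(\xi^\tau_i) \ge \eta(\xi^\sigma_j)$ for every $\eta \in (\tau\cap\sigma)\cap\Xi(1)$ — this is precisely the support condition in abstract data (2). (Here I use the standard description of regular functions on an affine toric variety $X_{\sigma\cap\tau}$ in terms of the dual cone and its ray generators, namely that a character $m \in M$ extends regularly over $X_{\sigma\cap\tau}$ iff $\eta(m)\ge 0$ for all $\eta$ in $(\sigma\cap\tau)\cap\Xi(1)$.) So $\mb{A}$, computed with Kaneyama sections, lands in the set of data $(\xi, P)$.

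Then I would match up the equivalence relations. If $\{s_\sigma\}$ and $\{s'_\sigma\}$ are two collections of Kaneyama sections, then $s'_\sigma = s_\sigma \cdot g_\sigma$ for some $g_\sigma \in G$; by Lemma \ref{welld} the associated $\rho'_\sigma = g_\sigma^{-1}\rho_\sigma g_\sigma$ is again diagonal, and two diagonal matrix homomorphisms into $K_0$ are conjugate in $G$ precisely when they differ by a permutation of the diagonal entries (the Weyl group action on a maximal torus is by permutations, given our embedding $G \le GL(r)$ with $K_0 = (\CC^*)^r \cap G$), so the $\xi$-data changes by a $\sigma$-dependent permutation $\gamma$, while $P$ changes by $P'(\tau,\sigma) = g_\tau^{-1}P(\tau,\sigma)g_\sigma$ — exactly equivalence (3). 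Conversely, any data $(\xi,P)$ satisfying (1) and (2) \emph{is} an admissible collection in the sense of Section \ref{sec3} (condition (2) of that definition being equivalent to the support condition by the regularity criterion above), so by the surjectivity half of Theorem \ref{class} it is realized by some bundle. Combining: the composite "bundle $\mapsto$ admissible collection via Kaneyama sections $\mapsto (\xi,P)$" is a well-defined bijection from isomorphism classes onto data $(1),(2)$ modulo $(3)$.

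The main obstacle I expect is the equivalence-relation matching, specifically the claim that the ambiguity in the diagonal form of $\rho_\sigma$ is exactly by permutations and not by a larger subgroup — this requires knowing that $N_G(K_0)/K_0$ embeds into the symmetric group $S_r$ via the given representation, equivalently that an element of $GL(r)$ conjugating a generic diagonal matrix to another diagonal matrix is a monomial matrix. This is elementary linear algebra (eigenspace considerations for a regular semisimple element whose image under $\rho_\sigma$ we may assume to be such, or a limiting argument if not), but it is the one place where the embedding $G \le GL(r,\CC)$ with $K_0 = (\CC^*)^r \cap G$ is genuinely used, and it needs to be stated carefully. Everything else is a transcription of Theorem \ref{class} and the regularity computation already performed in the proof of Lemma \ref{kl}.
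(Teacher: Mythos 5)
Your proposal is correct and follows essentially the same route as the paper: choose Kaneyama sections via Lemma \ref{ks}, translate condition (2) of an admissible collection into the support condition on $P(\tau,\sigma)_{ij}$ using the regularity computation already carried out in Lemma \ref{kl}, match the equivalences, and delegate everything else to the machinery of Theorem \ref{class}. The only point where you anticipate an obstacle that the paper dispatches more simply: the multiset $\{\xi^{\sigma}_i\}$ is just the set of characters of the representation $\rho_{\sigma}$, hence automatically invariant under conjugation, so the ``well-defined up to permutation'' claim needs no monomial-matrix or $N_G(K_0)/K_0\hookrightarrow S_r$ argument.
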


\begin{proof} Given a principal bundle, a choice of Kaneyama sections $\{s_{\sigma}\}$  associates a set of data
 $(\xi,P)$ satisfying  (1) and (2) to it. Note that $\{\xi_i^{\sigma}: 1\le i \le r\}$  is the set of characters of the representation
$\rho_{\sigma}$. Hence it is invariant under conjugation and therefore well-defined up to permutation under choice of
different Kaneyama sections $s_{\sigma} \cdot g(\sigma)$. We have seen before that such a change would transform $ \{P(\tau,\sigma)\}$
to $\{g(\tau)^{-1} P (\tau, \sigma) g(\sigma)\}$. Same arguments as in the proof of Theorem \ref{class} imply that this association sends an
 isomorphism class of bundles to an isomorphism class of data.

On the other hand, given a set of data  $(\xi,P)$ satisfying (1) and (2), we regard $\xi^{\sigma}$ as $\rho_{\sigma}$ and $P(\tau,\sigma)$ as transition
map at a point $x_0 \in O$ to construct a bundle $E(\{\rho,P\})$. The rest of the proof proceeds as in Theorem \ref{class}.
\end{proof}

Given $\xi$, for every $\sigma \in \Xi(n)$, we have a map
$$m_{\sigma}: \sigma \bigcap \Xi(1) \to \mathbb{Z}^r$$ defined by
$ m_{\sigma}(\eta)_i = \eta(\xi^{\sigma}_i)$. Note that as
$\sigma$ is smooth one may recover $\xi^{\sigma}$ from
$m_{\sigma}$. A strategy of Kaneyama is to reformulate the
equivalence (3) in terms of $m_{\sigma}$. It turns out as follows.

(3$'$) Two pairs $(\xi,P)$ and $(\xi',P')$ are said to be equivalent if there exists a permutation $\gamma= \gamma(\sigma) \in S(r)$ such that
$$ (m_{\sigma}(\eta)_1, \ldots, m_{\sigma}(\eta)_r ) =  (m_{\sigma}'(\eta)_{\gamma(1)}, \ldots, m_{\sigma}'(\eta)_{\gamma(r)} )$$ for every $\sigma \in \Xi(n)$ and
$\eta \in \sigma \bigcap \Xi(1)$, and if there exists $$ g: \Xi(n) \to G $$ such that $$ P'(\tau, \sigma)= g(\tau)^{-1} P (\tau, \sigma) g(\sigma) $$ for every
$\tau, \sigma$ in $\Xi(n)$.

The proof of the following result is straight-forward.

\begin{theorem}\label{class3} Let $X$ be a  nonsingular toric variety defined by a fan $\Xi$ of dimension $n$, such that every maximal cone of $\Xi$ has dimension $n$. Then,
the set of isomorphism classes of $T$-equivariant holomorphic (or algebraic) principal 
$G$-bundles on $X$ is in bijective correspondence with the set of data $(1)$ and $(2)$ up to the equivalence $(3')$.
\end{theorem}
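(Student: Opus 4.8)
The plan is to reduce Theorem~\ref{class3} to Theorem~\ref{class2} by showing that, under the hypothesis that every maximal cone has dimension $n$, the equivalence relation (3) on pairs $(\xi, P)$ coincides with the reformulated relation (3$'$). Since Theorem~\ref{class2} already establishes a bijection between isomorphism classes of $T$-equivariant principal $G$-bundles and pairs $(\xi,P)$ satisfying (1) and (2) modulo (3), it suffices to prove that the two equivalence relations agree. Observe first that when every maximal cone is of top dimension $n$, we have $T_\sigma = T$ for every $\sigma \in \Xi^*$, so $\pi_\sigma$ is the identity, the target $(\pi_\sigma^*(M_\sigma))^{\oplus r}$ of the map $\xi$ in datum (1) is simply $M^{\oplus r}$, and $\Xi^* = \Xi(n)$. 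In particular, no auxiliary choice of projections $\pi_\sigma$ intervenes.

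The heart of the argument is the claim that, for a smooth cone $\sigma$ of dimension $n$, recording $\xi^\sigma = (\xi^\sigma_1,\ldots,\xi^\sigma_r) \in M^{\oplus r}$ is equivalent to recording the function $m_\sigma : \sigma \cap \Xi(1) \to \ZZ^r$, $m_\sigma(\eta)_i = \eta(\xi^\sigma_i)$, and that this correspondence is compatible with permutations of the index set $\{1,\ldots,r\}$. Indeed, since $\sigma$ is nonsingular and $n$-dimensional, its primitive ray generators $\{\eta : \eta \in \sigma \cap \Xi(1)\}$ form a $\ZZ$-basis of $N$; hence each character $\xi^\sigma_i \in M = \Hom_\ZZ(N,\ZZ)$ is uniquely determined by the integers $\eta(\xi^\sigma_i)$ as $\eta$ ranges over $\sigma \cap \Xi(1)$, which is exactly the datum $m_\sigma$. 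This bijection $\xi^\sigma \leftrightarrow m_\sigma$ manifestly commutes with any permutation $\gamma \in S_r$ acting simultaneously on the components of $\xi^\sigma$ and on the components of each vector $m_\sigma(\eta)$. Therefore the condition in (3) that $(\xi^\sigma_1,\ldots,\xi^\sigma_r) = (\xi'^\sigma_{\gamma(1)},\ldots,\xi'^\sigma_{\gamma(r)})$ holds if and only if $(m_\sigma(\eta)_1,\ldots,m_\sigma(\eta)_r) = (m'_\sigma(\eta)_{\gamma(1)},\ldots,m'_\sigma(\eta)_{\gamma(r)})$ for all $\eta \in \sigma \cap \Xi(1)$, which is precisely the condition in (3$'$). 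The conditions on $P$ and $P'$ in (3) and (3$'$) are literally identical (the map $g$ has domain $\Xi^* = \Xi(n)$ in both), so the two equivalence relations coincide.

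Having identified (3) with (3$'$), the bijection of Theorem~\ref{class2} descends verbatim to a bijection between isomorphism classes of $T$-equivariant holomorphic (or algebraic) principal $G$-bundles on $X$ and the set of data (1), (2) modulo (3$'$), which is the assertion of Theorem~\ref{class3}. I do not anticipate a serious obstacle here; the only point requiring care is the clean statement that smoothness of the top-dimensional cone $\sigma$ makes the ray generators a lattice basis, so that $m_\sigma$ determines and is determined by $\xi^\sigma$ — this is the ``Note that as $\sigma$ is smooth one may recover $\xi^\sigma$ from $m_\sigma$'' remark preceding the theorem, and it is exactly why the hypothesis on dimensions of maximal cones is needed. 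Everything else is bookkeeping: checking that permutations act compatibly on both sides and that the $P$-part of the equivalence is unchanged.
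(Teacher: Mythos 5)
Your proposal is correct and follows exactly the route the paper intends: the paper declares the proof ``straight-forward'' after noting that smoothness of a top-dimensional cone lets one recover $\xi^{\sigma}$ from $m_{\sigma}$, and your argument simply fills in that the ray generators of a smooth $n$-dimensional cone form a $\ZZ$-basis of $N$, so the equivalences (3) and (3$'$) coincide and Theorem \ref{class2} applies verbatim. No gaps.
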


\begin{remark}\label{srb} A conjecture of Hartshorne \cite{Har} says that all vector bundles
 of rank two over ${\mathbb P}^n$ should split if $n \ge 7$. This is still open.
In \cite{Kan2}, the following equivariant analogue is discussed:
Any $T$-equivariant vector bundle of rank $r$ on   ${\mathbb P}^n$
splits if $r < n$. An attempt to reproduce the arguments given there
runs into the problem that conjugation by an elementary matrix does not preserve
the diagonal matrices.
A stronger result has recently been proved in an
article of Ilten and S$\ddot{\text{u}}$ss \cite{IS} using Klyachko-type filtrations \cite{Kly}.
It would be interesting to investigate if their result can be generalized to equivariant principal bundles.
\end{remark}

{\bf Acknowledgement.} It is a pleasure to thank V. Balaji for telling us about the work of
 Heinzner and Kutzschebauch, and Pralay Chatterjee for many useful discussions.
 We thank an anonymous referee for his/her valuable comments on an earlier draft.
  The first-named author is supported by a J. C. Bose fellowship. The last-named author is supported by a FAPA grant from the
Universidad de los Andes.

\end{document}